\documentclass[11pt]{amsart}

\usepackage{amsmath,amssymb,amsthm,tikz}
\usepackage[numbers,comma,square,sort&compress]{natbib}
\usepackage{mathabx}   
\usepackage{datetime2}
\usepackage{kbordermatrix}

\RequirePackage{color}
\RequirePackage[colorlinks,urlcolor=my-blue,linkcolor=my-red,citecolor=my-green]{hyperref}

\numberwithin{figure}{section}



\usepackage{enumerate}

\usepackage{scalerel}    
\usepackage{stmaryrd}   
\usepackage{mathabx}   

\definecolor{my-blue}{rgb}{0.0,0.0,0.6}
\definecolor{my-red}{rgb}{0.5,0.0,0.0}
\definecolor{my-green}{rgb}{0.0,0.5,0.0}
\definecolor{nicos-red}{rgb}{0.75,0.0,0.0}


\addtolength{\hoffset}{-.5in} 
\addtolength{\textwidth}{1.6in}
\addtolength{\oddsidemargin}{-0.3in}
\addtolength{\evensidemargin}{-0.3in}
\addtolength{\textheight}{0.8in}
\addtolength{\topmargin}{-0.4in}



\newtheorem{theorem}{\sc Theorem}[section]
\newtheorem{lemma}[theorem]{\sc Lemma}
\newtheorem{proposition}[theorem]{\sc Proposition}

\newtheorem{definition}[theorem]{\sc Definition}
\numberwithin{equation}{section}
\theoremstyle{remark}
\newtheorem{remark}[theorem]{Remark}

\newcommand{\be}{\begin{equation}}
\newcommand{\ee}{\end{equation}}
\newcommand{\beq}{\begin{equation}}
\newcommand{\eeq}{\end{equation}}

\providecommand{\abs}[1]{\vert#1\vert}

\def\cC{\mathcal{C}}

\def\cH{\mathcal{H}}

\def\cK{\mathcal{K}}

\def\cA{\mathcal{A}}
\def\cT{\mathcal{T}}  
\def\cY{\mathcal{Y}}

\def\kS{\mathfrak{S}}

\def\bE{\mathbb{E}} 

\def\bP{\mathbb{P}}

\def\bR{\mathbb{R}}
\def\bZ{\mathbb{Z}}

 \def\Z{\bZ}  \def\R{\bR}
  \def\evec{\mathbf{e}}  \def\uvec{\mathbf{u}}\def\vvec{\mathbf{v}}

\def\wvec{\mathbf{w}}

\def\w{\omega}

\def\e{\varepsilon}

\def\ddd{\displaystyle}

\def\mP{\mathbf{P}}
\def\m1{\mathbf{1}}


 \def\wt{\widetilde}    
 

\def\E{\bE}
\def\P{\bP} 

\def\OSP{(\Omega, \kS, \P)}
\def\OSPTh{(\Omega, \kS, \P, \Theta)}



\def\funct lp{L} 
\def\funct lpbar{\bar L} 


\def\Uset{\mathcal U}

\def\cA{\mathcal A}




\def\Gpp{G}

\def\gpp{g}


\def\h{{h}}

\DeclareMathOperator{\ri}{ri}    



\def\cH{\mathcal{H}}









\definecolor{darkgreen}{rgb}{0.0,0.5,0.0}
\definecolor{darkblue}{rgb}{0.0,0.0,0.3}
\definecolor{nicosred}{rgb}{0.65,0.1,0.1}
\definecolor{light-gray}{gray}{0.7}
\allowdisplaybreaks[1]

\usepackage{filemod}

 
\def\cif1{v}   

\def\deq{\overset{d}=}


\def\qedex{\hfill$\triangle$} 
   \def\tincr{t}   
\def\Xw{X}  
\def\Yw{Y}  
   


\def\bgeod#1#2{\mathbf b^{#1, #2}}

\def\bgeodu#1{\bgeod{\uvec}{#1}}

\def\swbgeod#1#2{\mathbf b^{{\rm sw},#1, #2}}

\def\swbgeodu#1{\swbgeod{\uvec}{#1}}

\def\dbgeod#1#2{\mathbf b^{*,#1, #2}}

\def\dbgeodu#1{\dbgeod{\uvec}{#1}}

\def\ageod#1#2{\pi^{#1, #2}}
\def\ageodu#1{\ageod{\uvec}{#1}}

\def\duale{\evec^*}  


\def\Bus#1{B^{#1}}


\newcommand\bbullet{{\raisebox{0.5pt}{\scaleobj{0.6}{\bullet}}}} 
\newcommand\brbullet{{\raisebox{-1pt}{\scaleobj{0.5}{\bullet}}}} 

\newcommand\dbullet{{\scaleobj{0.7}{\bullet}}}   



\def\wtcH{\wt\cH} 
 

 \RequirePackage{datetime} 

\allowdisplaybreaks[1]
\setcounter{tocdepth}{1}   

\begin{document}
 
\title[Coalescence of geodesics]{Existence, uniqueness and coalescence of directed planar geodesics: proof via the  increment-stationary growth process}

\author[T.~Sepp\"al\"ainen]{Timo Sepp\"al\"ainen}
\address{Timo Sepp\"al\"ainen\\ University of Wisconsin-Madison\\  Mathematics Department\\ Van Vleck Hall\\ 480 Lincoln Dr.\\   Madison WI 53706-1388\\ USA.}
\email{seppalai@math.wisc.edu}
\urladdr{http://www.math.wisc.edu/~seppalai}
\thanks{
The author was partially supported by  National Science Foundation grants  DMS-1602486 and DMS-1854619 and by  the Wisconsin Alumni Research Foundation.} 

\keywords{Busemann function, coalescence, cocycle, competition interface, corner growth model,  directed percolation, geodesic, last-passage percolation}
\subjclass[2000]{60K35, 65K37} 
\date{\today} 

\begin{abstract}
We present a proof of the almost sure existence, uniqueness and coalescence of directed semi-infinite geodesics in  planar growth models that is based on properties of an  increment-stationary version of the growth process. The argument is developed in the context of the exponential corner growth model. It uses coupling, planar monotonicity, and properties of the  stationary growth process to derive the existence   of Busemann functions, which in turn control geodesics.  This soft approach is  in some situations an alternative to the much-applied 20-year-old arguments of C.~Newman and co-authors. Along the way we derive some related results   such as the distributional equality of the directed geodesic tree and its dual, originally due to L.~Pimentel. 
\end{abstract}
\maketitle


\section{Introduction}
 
 \subsection{The corner growth model and its geodesics} 
The setting for the {\it planar corner growth model} (CGM) with {\it exponential weights} is  the following.    $\OSPTh$ is a measure-preserving $\Z^2$-dynamical system.  This means that  $\OSP$ is a probability space and  $\Theta=(\theta_x)_{x\in\Z^2}$ is a group of measurable bijections that acts  on $\Omega$   and  preserves $\P$: $\P(\theta_xA)=\P(A)$ for all events $A\in\kS$ and $x\in\Z^2$.    The  generic sample point  of $\Omega$ is denoted by $\w$.  The random weights   $\Yw=(\Yw_x)_{x\in\Z^2}$  are independent, identically distributed  (i.i.d.)  rate  1 exponentially distributed random variables   on $\Omega$ that  satisfy $\Yw_x(\w)=\Yw_0(\theta_x\w)$ for each $x\in\Z^2$ and almost every $\w\in\Omega$.  


 The canonical choice is the product space $\Omega=\R_{\ge0}^{\Z^2}$ with translations $(\theta_x\w)_y=\w_{x+y}$, an i.i.d.\ product measure $\P$ and the coordinate process $\Yw_x(\w)=\w_x$.  

  The {\it last-passage percolation}  (LPP) process $\Gpp=\Gpp^Y$  is defined for $x\le y$ (coordinatewise order)  on $\Z^2$ by 
 	\be\label{v:GY7}  
	  \Gpp_{x,y}=\Gpp(x,y)=\max_{x_{\brbullet}\,\in\,\Pi_{x,y}}\sum_{k=0}^{\abs{y-x}_1}\Yw_{x_k}.
	\ee
	$\Pi_{x,y}$ is the set of  {\it up-right paths} $x_{\dbullet}=(x_k)_{k=0}^n$  that start at  $x_0=x$ and   end at $x_n=y$, with $n=\abs{y-x}_1$.    By definition, the increments of   an up-right path satisfy   $x_{k+1}-x_k\in\{\evec_1,\evec_2\}$.      A path can be equivalently characterized in terms of its vertices or its edges.  Both points of view are useful.  See Figure \ref{fig:cgm}  for an illustration.   The zero-length path case  is $G_{x,x}=\w_x$.    Our convention is that 
\be\label{v:Gfail} 	 \Gpp_{x,y}=-\infty\qquad   \text{if $x\le y$ fails. } \ee

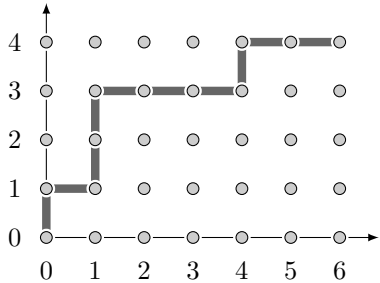
\begin{figure}[h] 
	\begin{center}
		\begin{tikzpicture}[>=latex, scale=0.65]  

			\definecolor{sussexg}{gray}{0.7}
			\definecolor{sussexp}{gray}{0.4} 

			\draw[<->](0,4.8)--(0,0)--(6.8,0);
			
			\draw[line width = 3.2 pt, color=sussexp](0,0)--(0,1)--(1,1)--(1,3)--(3,3)--(4,3)--(4,4)--(6,4);
			
			\foreach \x in { 0,...,6}{
				\foreach \y in {0,...,4}{
					\fill[color=white] (\x,\y)circle(1.8mm); 
					
					\draw[ fill=sussexg!65](\x,\y)circle(1.2mm);
				}
			}	
			
			\foreach \x in {0,...,6}{\draw(\x, -0.3)node[below]{\small{$\x$}};}
			\foreach \x in {0,...,4}{\draw(-0.3, \x)node[left]{\small{$\x$}};}	
			
		\end{tikzpicture}
	\end{center}
	\caption{ \small  An example of an up-right path from $(0,0)$ to $(6,4)$ on the lattice $\Z^2$. }\label{fig:cgm} 
  \end{figure}

The   shape function of the exponential CGM has been known since the seminal paper of Rost \cite{rost}:  
 \be\label{g}
 \gpp(\xi)= \bigl(\sqrt \xi_1+\sqrt \xi_2\,\bigr)^2\qquad \text{for  }  \xi=(\xi_1, \xi_2)\in\R_{\ge0}^2 .  
 \ee
The shape theorem is the law of large numbers of the LPP process, uniform in all directions   (Theorem 5.1 in \cite{mart-04}, Theorem 3.5 in \cite{sepp-cgm-18}):
 
\begin{theorem}\label{t:sh} 
Given $\e>0$, there exists a $\P$-almost surely finite  random variable  $K$ such that 
\be\label{sh89} \abs{ \Gpp_{0,x}-\gpp(x)}  \le \e\abs{x}_1 
\quad \text{ for all $x\in\Z_{\ge 0}^2$ such that $\abs{x}_1\ge K$.}  
\ee
\end{theorem}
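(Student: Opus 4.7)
The plan is to establish pointwise convergence along rational lattice rays via Kingman's subadditive ergodic theorem, then upgrade to the uniform estimate using coordinatewise monotonicity of $\Gpp$, continuity of $\gpp$, and compactness of the unit simplex. First, fix $\xi \in \Z_{>0}^2$. The process $(-\Gpp_{m\xi,n\xi})_{0\le m\le n}$ is subadditive via $\Gpp_{0,(m+n)\xi} \ge \Gpp_{0,m\xi} + \Gpp_{m\xi,(m+n)\xi}$, stationary and ergodic under $\theta_\xi$, and integrable since $\Gpp_{0,\xi}$ is dominated by the sum of all weights in the rectangle $[0,\xi]$. Kingman's theorem then yields $n^{-1}\Gpp_{0,n\xi} \to \gpp(\xi)$ almost surely, and a countable union over $\xi \in \Z_{>0}^2$ produces a single full-measure event on which this holds for every integer direction. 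The axis cases $x=n\evec_i$ reduce to the classical strong law for $\sum_{k=0}^n \Yw_{k\evec_i}$, consistent with $\gpp(\evec_i)=1$, while the identification of the limit as $(\sqrt{\xi_1}+\sqrt{\xi_2})^2$ is Rost's result quoted in \eqref{g}.

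Second, I would run a sandwich argument to upgrade to uniformity. Since $\Yw\ge 0$, the map $x\mapsto \Gpp_{0,x}$ is coordinatewise nondecreasing, and $\gpp$ is concave, $1$-homogeneous, and uniformly continuous on the compact unit simplex $\{\abs{\xi}_1=1,\,\xi\ge 0\}$. Given $\e>0$, choose finitely many rational sandwich pairs $\xi^{(i,-)}\le \xi^{(i,+)}$ whose arcs cover the simplex, with $\abs{\gpp(\xi^{(i,\pm)})-\gpp(\xi)}<\e/3$ for every $\xi$ in arc $i$. For any lattice $x$ with $\abs{x}_1=n$ large, write $\xi=x/n$, locate the containing arc, and approximate $x$ above and below by integer points $x^-\le x\le x^+$ on the rays through $\xi^{(i,\pm)}$, with $\abs{x^\pm-x}_1$ bounded by a constant depending only on the rational denominators in the cover. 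Monotonicity gives $\Gpp_{0,x^-}\le \Gpp_{0,x}\le \Gpp_{0,x^+}$, the pointwise LLN applied simultaneously to the finite list of directions $\xi^{(i,\pm)}$ gives $\abs{\Gpp_{0,x^\pm}-\gpp(x^\pm)}\le \e n/3$ once $n\ge K$, and continuity of $\gpp$ gives $\abs{\gpp(x^\pm)-\gpp(x)}\le \e n/3$. Combining yields $\abs{\Gpp_{0,x}-\gpp(x)}\le \e n$.

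The main obstacle is uniform control near the axes of the simplex, where $\nabla\gpp$ blows up and the naive outer sandwich fails because any rational ray strictly above the $x_1$-axis carries a positive $x_2$-component whose LPP contribution can be as large as $\e n$. The remedy is a hybrid in a thin strip $\{x_2\le \eta n\}$: choose $\eta$ small enough that $\gpp$ varies by at most $\e/3$ across the strip (possible by continuity of $\gpp$ at the axis), control $\Gpp_{0,(x_1,0)}$ by the one-dimensional strong law, and bound the additional cost $\Gpp_{0,x}-\Gpp_{0,(x_1,0)}$ by an $O(\eta n)$ LLN estimate along a near-vertical rational direction. A symmetric argument covers a neighborhood of the $x_2$-axis. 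Together with the interior sandwich of the previous paragraph, this produces the uniform bound on a single event of full probability.
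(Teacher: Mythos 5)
The paper does not actually prove Theorem \ref{t:sh}; it quotes it with references (Theorem 5.1 of \cite{mart-04}, Theorem 3.5 of \cite{sepp-cgm-18}), so there is no in-paper argument to compare against. Your proposal is, in outline, the standard proof found in those references: Kingman's subadditive ergodic theorem along rational rays (with the limit identified by Rost's computation, which you correctly treat as an input, just as the paper does for \eqref{g}), followed by an upgrade to uniformity via coordinatewise monotonicity of $x\mapsto \Gpp_{0,x}$, homogeneity and uniform continuity of $\gpp$ on the simplex, and a finite cover of directions. The strategy is correct and complete in outline; three execution details deserve care. First, with the vertex convention \eqref{v:GY7} the concatenation inequality is $\Gpp_{0,(m+n)\xi}\ge \Gpp_{0,m\xi}+\Gpp_{m\xi,(m+n)\xi}-\Yw_{m\xi}$, since the junction vertex is counted twice; pass to $\Gpp_{x,y}-\Yw_y$ to get exact superadditivity (harmless). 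Second, the sandwich as literally stated, with $x^-\le x\le x^+$ lying on the two rays \emph{bounding} the arc containing $x/n$, cannot hold coordinatewise at comparable $\ell^1$-scale: a ray on one side of $x/n$ dominates $x$ in one coordinate and is dominated in the other. The fix is to take $x^\pm$ on a nearby rational ray scaled down, respectively up, by a factor $1\mp O(\delta)$, which costs only another $\e n/3$ by homogeneity. Third, your concern near the axes is legitimate and your strip remedy works; note that the upper bound there is cleanest as $\Gpp_{0,x}\le \Gpp_{0,x^+}$ with $x^+$ on the single ray of direction $(1-\eta,\eta)$ scaled to dominate $x$, the loss being $O(\sqrt\eta\,)\,n$ because $\gpp(1-s,s)=1+2\sqrt{s(1-s)}$, rather than as a bound on the difference $\Gpp_{0,x}-\Gpp_{0,(x_1,0)}$, for which no clean superadditivity is available. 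None of these is a gap in the idea.
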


An up-right path $(x_i)_{i\in I}$ indexed by a finite or infinite subinterval $I\subset\Z$  is a  {\it geodesic}  if it is the maximizing path between any two of its points: 
\be\label{geod19}  
	  \Gpp_{x_k, x_\ell}= \sum_{i=k}^{\ell}\Yw_{x_i} \qquad \text{for all $k<\ell$ in } I.
	\ee
Since the weight distribution is continuous, maximizing paths between any two points are unique $\P$-almost surely.  
A geodesic $(x_i)_{i\in\Z_{\ge0}}$ indexed by nonnegative integers  is called a  {\it semi-infinite geodesic} started at $x_0$, and a geodesic $(x_i)_{i\in\Z}$ indexed by the entire  integer line   is a  {\it bi-infinite geodesic}.     A semi-infinite or bi-infinite geodesic $x_\dbullet$ 
is {\it $\uvec$-directed}  if   
$x_n/n\to\uvec$ as $n\to\infty$.  

\subsection{The purpose of the paper and its relation to past work} 
We address  the   existence, uniqueness and coalescence of semi-infinite  geodesics in a given direction $\uvec$.  The results  themselves  are not new.  The purpose   is to present an alternative proof of these known results.    


Already for about  two decades,  geodesics and the closely related Busemann functions have been important in the study of first- and last-passage growth models,  and recently also in positive-temperature polymer models.   Proof techniques for the  existence, uniqueness and coalescence of semi-infinite directed geodesics  developed by  C.~Newman and co-authors \cite{howa-newm-01, lice-newm-96, newm-icm-95}  have played a  central role in this work.  
This approach controls  the   wandering of geodesics with estimates that rely on   assumptions on the limit shape,  to show that each direction has a geodesic and each geodesic has a direction.  
Almost sure coalescence is shown by a modification argument followed by a Burton-Keane type lack of space argument.   

 These techniques have been applied to great benefit in many models where sufficient solvability  or symmetries enable the verification of the  hypotheses imposed on the limit shape.  In the exponential CGM this proof  was implemented  by P.~A.~Ferrari and L.~Pimentel \cite{ferr-pime-05}.    Examples of applications   to LPP and positive-temperature polymers with quadratic limit shapes appear in \cite{bakh-cato-khan-14, bakh-li, cato-pime-11}.  
 


The proof developed in this paper  replaces the  estimates that control geodesics and the technical modification arguments  with a softer proof that comes  from structural properties.     
This  proof   can  be substituted for  Newman's proof in cases where sufficiently tractable increment-stationary versions of the growth process can be constructed.    This may  be possible in some  situations where shift-invariance and curvature  are  not available.  This  would be the case for example in models with inhomogeneous parameters, such as those whose limit shapes are studied in  \cite{emra-16-ecp}.  

As a consequence of our development  we establish 
Pimentel's distributional equality  \cite{pime-16}  of the directed geodesic tree and its dual,  without recourse to mappings between the CGM and the  totally asymmetric simple exclusion process (TASEP).   It is useful to develop a proof of this result  within the context of the growth model itself,  for the purpose of  extension  to growth models and  polymer models that are not connected to particle systems.  Pimentel   \cite{pime-16} used this duality to derive bounds on coalescence times. 

\subsection{Other related work} 
Recent work where coalescence of geodesics figures prominently include \cite{geor-rass-sepp-17-geod} on the CGM with general weights and \cite{damr-hans-14, damr-hans-17} on undirected first-passage percolation.  These papers prove coalescence with  the Licea-Newman argument.  The proof given here does  not presently apply to the models  studied there    because the properties of their  Busemann functions are not yet sufficiently well understood.  

Chaika and Krishnan \cite{chai-kris-arxiv} consider paths on a lattice defined by an ergodic field of nearest-neighbor ``arrows'', or local gradients.  They use ergodicity and a very general volume argument to show that if coalescence fails, bi-infinite paths exist.  Theirs would be an alternative  proof of  the (iii)$\Longrightarrow$(i) implication for Busemann geodesics  in Lemma \ref{bgeod-lm8}  below.   Our argument is more model-specific and uses the equal distribution of Busemann geodesics and their duals. 

 \subsection{Notation and conventions}  
Points $x=(x_1,x_2),y=(y_1,y_2)\in\R^2$ are ordered coordinatewise: $x\le y$ iff  $x_1\le y_1$  and $x_2\le y_2$.    The $\ell^1$ norm is $\abs{x}_1=\abs{x_1}+\abs{x_2}$.   A path as a sequence of points $(x_k)_{k=0}^n$ can be denoted by $x_\dbullet$ or by $x_{0,n}$.  Subscripts indicate restricted subsets of the reals and integers: for example 
$\Z_{>0}=\{1,2,3,\dotsc\}$ and $\Z_{>0}^2=(\Z_{>0})^2$ is the positive first quadrant of the planar integer lattice.    Boldface  notation for special vectors: $\evec_1=(1,0)$, $\evec_2=(0,1)$,   and members of the simplex 
$\Uset=\{t\evec_1+(1-t)\evec_2: 0\le t\le 1\}$ are denoted by $\uvec$, $\vvec$ and $\wvec$.  
   For $0<\alpha<\infty$, $X\sim$ Exp$(\alpha)$ means that random variable $X$ has exponential distribution with rate $\alpha$, in other words $P(X>t)=e^{-\alpha t}$ for $t>0$ and $E(X)=\alpha^{-1}$.  Functional arguments can be equivalently written as subscripts, as in $B(x,y,\w)=B_{x,y}(\w)$. 
   
   \subsection{Acknowledgements}  This paper benefited from numerous discussions and collaborations   over the years,  especially with  E.~Emrah, N.~Georgiou, C.~Janjigian, A.~Krishnan, F.~Rassoul-Agha, and A.~Y{\i}lmaz.  The exposition was improved by four anonymous referees. 

\section{Main results on directed semi-infinite geodesics} \label{s:main}
 Here is a restatement of the assumption: 
\be\label{ass-B} 
\begin{aligned}  
&\text{$\OSPTh$ is a measure-preserving $\Z^2$-dynamical system and   $\Yw=(\Yw_x)_{x\in\Z^2}$   }\\ 
&\text{are i.i.d.\ Exp(1) random variables on  $\Omega$ that    satisfy $\Yw_x(\w)=\Yw_0(\theta_x\w)$ $\P$-a.s.}  
  \end{aligned}\ee

  The set of possible asymptotic velocities or direction vectors for semi-infinite up-right paths is $\Uset=\{(t, 1-t): 0\le t\le 1\}$, 
  with relative interior $\ri\Uset=\{(t, 1-t): 0< t<1\}$.   
 

We start with the results that are almost surely valid for all geodesics and directions.

\begin{theorem}\label{t:main-1}  Assume \eqref{ass-B}.   Then the following statements hold with $\P$-probability one. 

{\rm (i)}  Each semi-infinite geodesic is $\uvec$-directed for some $\uvec\in\Uset$.   

{\rm (ii)} For $r\in\{1,2\}$  and each $x\in\Z^2$,  $\{x_k=x+k\evec_r\}_{k\in\Z_{\ge0}}$ is the only semi-infinite geodesic that satisfies $x_0=x$ and $\varliminf_{k\to\infty} k^{-1} x_k\cdot\evec_{3-r}=0$. 


{\rm (iii)}  For each $\uvec\in\Uset$  and $x\in\Z^2$   there exists a  $\uvec$-directed semi-infinite geodesic that starts at $x$.  
\end{theorem}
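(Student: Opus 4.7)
Plan: The whole theorem is driven by a family of Busemann functions $B^{\uvec}(x,y)$ for $\uvec \in \ri\Uset$, built by coupling the CGM to increment-stationary versions of the growth process. For each parameter $\alpha \in (0,1)$ the stationary model has horizontal edge increments $\mathrm{Exp}(\alpha)$ and vertical edge increments $\mathrm{Exp}(1-\alpha)$, and produces a cocycle $B^{\alpha}(x,y)$ satisfying the recovery identity $\Yw_x = B^{\alpha}(x,x+\evec_1)\wedge B^{\alpha}(x,x+\evec_2)$, with a characteristic direction $\uvec(\alpha)$ read off from the gradient of $\gpp$ in \eqref{g}. Before attacking (i)--(iii) I would verify the cocycle and recovery properties for $B^\alpha$, the planar monotonicity of $B^\alpha(x,x+\evec_1)$ and $B^\alpha(x,x+\evec_2)$ in $\alpha$, and a simultaneous construction on a single probability space for all rational $\alpha$, so that the resulting family of Busemann geodesics sweeps out every direction in $\ri\Uset$.

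For (iii), given $\uvec\in\ri\Uset$ and $x\in\Z^2$, pick $\alpha$ with $\uvec(\alpha)=\uvec$ and define the Busemann geodesic $(x_k)_{k\geq 0}$ from $x$ recursively: $x_{k+1}=x_k+\evec_r$ where $r$ attains the minimum in the recovery identity at $x_k$. The cocycle property turns $(x_k)$ into a semi-infinite geodesic since the accumulated weight telescopes to $B^\alpha(x,x_n)$. Applying the ergodic theorem under the stationary measure to this telescoping sum and comparing with the shape theorem (Theorem \ref{t:sh}) identifies the asymptotic direction as $\uvec(\alpha)$. Axial directions $\uvec\in\{\evec_1,\evec_2\}$ are handled by letting $\alpha\to 1$ or $\alpha\to 0$ and extracting a monotone limit of the Busemann geodesics, which is the corresponding axial ray.

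For (ii), part (i) together with the $\varliminf$ hypothesis forces the geodesic to be $\evec_r$-directed. Planar monotonicity in $\alpha$ squeezes any such geodesic between Busemann geodesics in directions $\uvec(\alpha)$ for $\alpha$ close to the axial value, and the monotone limit of those geodesics is the axial ray, so the geodesic must coincide with it. For (i), suppose a semi-infinite geodesic $x_{\dbullet}$ had two subsequential directional limits $\uvec,\vvec$ with $\uvec\cdot\evec_1<\vvec\cdot\evec_1$. Pick rational $\alpha$ with $\uvec\cdot\evec_1<\uvec(\alpha)\cdot\evec_1<\vvec\cdot\evec_1$ and consider the Busemann geodesic $y_{\dbullet}$ from $x_0$ in direction $\uvec(\alpha)$. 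Two distinct up-right geodesics emanating from a common site can meet at only finitely many places (planar monotonicity plus a.s.\ uniqueness of finite maximizing paths), so $x_\dbullet$ is eventually on one side of $y_\dbullet$; this forces both subsequential limits of $x_\dbullet/n$ to lie on the same side of $\uvec(\alpha)$ in $\Uset$, contradicting the straddle.

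The main obstacle is the construction of the cocycle family $\{B^\alpha\}_{\alpha \in (0,1)}$ with all required properties using only the coupling to the increment-stationary growth process and planar monotonicity, rather than shape-curvature estimates or Newman-style bounds on geodesic wandering. Once the family is in place and the map $\alpha\mapsto\uvec(\alpha)$ is shown via \eqref{g} to be a bijection between $(0,1)$ and $\ri\Uset$, parts (i)--(iii) follow from planar monotonicity and the shape theorem, with (ii) reduced to the axial limits $\alpha \to 0,1$ and (i) to a single interpolation argument.
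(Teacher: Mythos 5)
Your treatments of parts (i) and (iii) essentially reproduce the paper's argument: (i) is the same crossing-versus-uniqueness-of-finite-geodesics argument against two distinct subsequential directions, and (iii) for $\uvec\in\ri\Uset$ is the Busemann geodesic built from the recovery identity, whose direction is identified via the cocycle ergodic theorem, the shape theorem, and (a point you elide but that is needed) the \emph{strict} concavity of $\gpp$ on $\Uset$. For the axial case of (iii) you take an unnecessary detour: the ray $x_k=x+k\evec_r$ is trivially an $\evec_r$-directed geodesic, since it is the unique up-right path between any two of its points; no limit $\alpha\to 0,1$ is required.

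The genuine gap is in part (ii). Your argument squeezes a geodesic satisfying the $\varliminf$ hypothesis between the Busemann geodesics $\bgeoda{x}$ and the boundary of the lattice, and then asserts that ``the monotone limit of those geodesics is the axial ray.'' That assertion is precisely the content of part (ii) restated for the limit path, and you give no argument for it: the monotone limit of $\bgeoda{x}$ as $\alpha\to 1$ is a priori only some $\evec_1$-directed semi-infinite geodesic, which could take $\evec_2$ steps at a sublinear rate. The missing input is the divergence of the vertical Busemann increments as the direction approaches $\evec_1$: since $\Bus{\uvec}_{x,x+\evec_2}\sim\mathrm{Exp}(1-\alpha(\uvec))$ and is monotone in $\uvec$ by \eqref{v:853.1}, one has $\Bus{\uvec}_{x,x+\evec_2}\to\infty$ a.s.\ as $\uvec\to\evec_1$, while $\Bus{\uvec}_{x,x+\evec_1}$ decreases and stays finite; hence for each fixed $x$ and $k$ the first $k$ steps of $\bgeoda{x}$ are eventually all $\evec_1$, which is what identifies the limit as the axial ray. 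The paper uses the same divergence differently: it shows that for any geodesic $\pi$ from $0$ with $\varliminf n^{-1}\pi_n\cdot\evec_2=0$ that ever takes an $\evec_2$ step, the increment $\Gpp_{0,\pi_n}-\Gpp_{\evec_2,\pi_n}$ is bounded above by a fixed finite quantity coming from the geodesic property, yet by planar monotonicity and the Busemann limit \eqref{v:855} its $\varlimsup$ dominates $\Bus{\wvec_k}_{0,\evec_2}$ for every $k$, a contradiction. Either route works, but without the divergence of $\Bus{\uvec}_{0,\evec_2}$ (or an equivalent) your part (ii) does not close.
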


Parts  (i)--(ii) together say that except for the trivial geodesics $x_k=x+k\evec_r$ with constant increments, every semi-infinite geodesic is directed towards a vector $\uvec$ in the {\it interior}  of the first quadrant.  

The next theorem states properties that hold almost surely for a given direction $\uvec$. 

\begin{theorem}\label{t:main1}  Assume \eqref{ass-B}.   
Fix $\uvec\in\ri\Uset$.  Then the following statements hold  with $\P$-probability one. 

{\rm (i)}  For each $x\in\Z^2$ there exists a unique $\uvec$-directed semi-infinite geodesic $\ageodu{x}=(\ageodu{x}_k)_{k\in\Z_{\ge0}}$ with initial point  $\ageodu{x}_0=x$. Each point $\ageodu{x}_k$ is a Borel function of the weights $\Yw$.   For each pair $x,y\in\Z^2$ these geodesics coalesce: that is,  there exists $z\in\Z^2$ such that 
 $\ageodu{x}\cap \ageodu{y}=\ageodu{z}$. 

{\rm (ii)} There is no bi-infinite geodesic in direction $\uvec$. 
\end{theorem}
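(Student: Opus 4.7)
The plan is to build on two structural properties of the Busemann function $B^{\uvec}$ for direction $\uvec$, whose existence, stationarity, and marginals are obtained earlier in the paper from the increment-stationary growth process: the \emph{cocycle identity} $B^{\uvec}(x,z) = B^{\uvec}(x,y) + B^{\uvec}(y,z)$ and the \emph{recovery identity} $Y_z = \min\{B^{\uvec}(z,z+\evec_1), B^{\uvec}(z,z+\evec_2)\}$. Continuity of the joint law of Busemann increments guarantees that, outside a $\P$-null set, the minimum is strictly attained by a unique coordinate direction at every $z \in \Z^2$. This lets me define the Busemann geodesic $\ageodu{x}$ by the deterministic step rule ``from $z$ move to $z+\evec_r$ where $r$ is the unique index with $B^{\uvec}(z,z+\evec_r) = Y_z$''. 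Borel measurability as a functional of $Y$ is immediate. Telescoping the cocycle along $\ageodu{x}$ gives $\sum_{k=0}^{n-1} Y_{\ageodu{x}_k} = B^{\uvec}(x, \ageodu{x}_n)$, while any competing up-right path from $x$ to $\ageodu{x}_n$ contributes at most this sum via $Y_z \le B^{\uvec}(z, z+\evec_r)$; hence $\ageodu{x}$ is a geodesic. Birkhoff's ergodic theorem applied to the $\{\evec_1,\evec_2\}$-valued step indicator along $\ageodu{x}$ pins the asymptotic mean step, and hence the direction, to $\uvec$.

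Uniqueness follows by comparison. If $y_{\dbullet}$ is any $\uvec$-directed semi-infinite geodesic from $x$, then $\sum_{k=0}^{n-1} Y_{y_k} = \Gpp(x, y_n)$, while Theorem \ref{t:sh} together with the Busemann law of large numbers $n^{-1} B^{\uvec}(x, y_n) \to \gpp(\uvec)$ forces this running sum to equal the Busemann telescope to leading order. A strict deficit in the recovery inequality at any vertex where $y_{\dbullet}$ disagrees with $\ageodu{x}$ would accumulate as an $O(1)$ loss incompatible with the asymptotic equality, so $y_{\dbullet} = \ageodu{x}$ and existence, uniqueness, and measurability in part (i) are established modulo coalescence.

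The first genuine obstacle is coalescence, which I would derive from the distributional equality between the primal Busemann geodesic tree $\{\ageodu{x}\}_{x \in \Z^2}$ and its southwest dual tree $\{\dbgeodu{x}\}_{x \in \Z^2}$---the Pimentel-type identity derived in this paper directly from the increment-stationary model. The argument proceeds by contradiction: if $\P(\ageodu{x}$ and $\ageodu{y}$ never coalesce$) > 0$ for some $x, y$, then translation invariance and ergodicity of $\Theta$ produce almost surely infinitely many pairwise disjoint primal Busemann rays. Planarity prevents a dual southwest ray from escaping the infinite strip trapped between two non-coalescing primal rays, so under the primal--dual distributional equality the dual tree would have to exhibit the same configuration of disjoint rays confined to bounded strips. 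The duality forces the two probabilities to agree, and the geometric incompatibility between the primal and dual trapping events drives their common value to zero. This translation of the Licea--Newman lack-of-space argument into the self-dual framework is the delicate step.

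For (ii), a bi-infinite $\uvec$-directed geodesic $\{x_i\}_{i \in \Z}$ has every forward tail $\{x_i\}_{i \ge k}$ equal, by the uniqueness just proved, to the Busemann geodesic $\ageodu{x_k}$. The bi-infinite geodesic is therefore a trajectory in the Busemann forest extending to infinity in both coordinate directions. By translation invariance, the event that the origin admits such an infinite southwest Busemann ancestry has probability zero or one; a positive-probability scenario combined with the coalescence from (i) would produce a deterministic backward direction (the a.s.\ limit of the competition interface in the dual Busemann forest, itself an ergodic functional of $B^{\uvec}$) that is incompatible with the geometric constraints imposed by having infinitely many coalescing backward rays. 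The resulting contradiction rules out bi-infinite $\uvec$-directed geodesics and completes part (ii). I expect the primal--dual distributional identity to be the crucial structural input throughout, with the coalescence step in (i) the main technical hurdle.
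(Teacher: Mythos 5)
Your overall architecture matches the paper's (Busemann geodesics defined by minimal $\Bus{\uvec}$-increments, primal--dual distributional equality, a lack-of-space argument), but three of your steps do not go through as described. First, directedness: Birkhoff's theorem does not apply to the step indicators along $\ageodu{x}$, because that sequence is not stationary under any shift available to you --- the environment seen from successive points of the geodesic is not the ambient stationary measure. Indeed the paper emphasizes that the density of $\evec_1$-steps along the path is $u_1$ while the law of a single step gives $\evec_1$ with probability $\alpha(\uvec)\ne u_1$; distributional properties \emph{along} the geodesic are exactly what is hard. The paper instead applies the cocycle ergodic theorem (Theorem \ref{cc-ergthm}) to $\Bus{\uvec}$, combines it with the shape theorem and identity \eqref{bgeod93}, and uses strict concavity of $\gpp$ in the form \eqref{bg-gpp7} to pin down the direction. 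Second, uniqueness: your deficit argument fails because ``equality to leading order'' only says $n^{-1}\Gpp(x,y_n)$ and $n^{-1}\Bus{\uvec}(x,y_n)$ share a limit; an $O(1)$ (or even $o(n)$) cumulative deficit from deviating vertices is perfectly compatible with that. A $\uvec$-directed geodesic that is not a Busemann geodesic would satisfy $\Gpp(x,y_n)=\sum_k Y_{y_k}<\Bus{\uvec}(x,y_n)+Y_{y_n}$ with no contradiction whatsoever. The paper's uniqueness proof is instead a planar squeeze: by uniqueness of finite geodesics, any $\vvec$-directed geodesic from $x$ is eventually trapped weakly between $\bgeodu{x}$ and $\bgeod{\wvec}{x}$ for $\uvec\prec\vvec\prec\wvec$, and one lets $\uvec,\wvec\to\vvec$ using the continuity \eqref{v:855.7}.

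On coalescence, your duality reduction (disjoint primal rays trap a dual backward ray, which by the distributional equality transfers the pathology back to the primal tree) is essentially the paper's Lemma \ref{bgeod-lm8}, which shows that the existence of a bi-infinite $\Bus{\uvec}$-geodesic, of an infinite backward cluster, and of a disjoint pair of $\Bus{\uvec}$-geodesics all have equal probability. But your closing claim that ``the geometric incompatibility between the primal and dual trapping events drives their common value to zero'' is not an argument: duality equates probabilities, it cannot by itself make them vanish (this is precisely why the paper notes that the Chaika--Krishnan argument only yields the implication from failure of coalescence to existence of bi-infinite paths). A separate input is needed to kill the common event, and in the paper it is the midpoint problem, Theorem \ref{th:midpt}, applied with $u_n=x_{-n}$, $z_n=0$, $v_n=x_n$ along a putative bi-infinite $\Bus{\uvec}$-geodesic, after establishing the backward direction $x_{-n}/n\to-\uvec$ by the same cocycle-ergodic-theorem-plus-concavity argument. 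Your part (ii) sketch never supplies this input either; a ``deterministic backward direction incompatible with geometric constraints'' is not a contradiction by itself. Without Theorem \ref{th:midpt} or an equivalent, both the coalescence claim in (i) and all of (ii) remain unproved.
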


Let $\cT_\uvec$ be the tree of all the $\uvec$-directed semi-infinite geodesics $\{\ageodu{x}: x\in\Z^2\}$.  That is, 
\be\label{T1}   \cT_\uvec=\bigcup_{x\in\Z^2} \ageodu{x}  \ee
 when we regard a geodesic as a collection of edges.  

 The {\it dual lattice}  $\Z^{2*}$ of $\Z^2$ 
    is obtained  by translating all the vertices and (nearest-neighbor) edges of  $\Z^2$ by the vector   $\duale=\tfrac12(\evec_1+\evec_2)=(\tfrac12,\tfrac12)$.   
       An edge  of $\Z^2$ and an edge   of $\Z^{2*}$ are  {\it dual}  if  they  cross each other or, equivalently,  intersect at their midpoints.   The unique dual of an edge $e$ of $\Z^2$ is denoted by $e^*$, and  similarly $f^*$ denotes the dual of an edge $f$ of $\Z^{2*}$.   In particular,    if   $e=\{x-\evec_k,x\}$ then $e^*=\{x-\duale, x-\duale+\evec_{3-k}\}$, and $e^{**}=e$.   
       
        The dual graph $\cT_\uvec^*$ of the tree $\cT_\uvec$ is defined through the edge duality:  
     \be\label{T*}  
     e^*\in\cT_\uvec^* \quad\text{if and only if} \quad e\notin\cT_\uvec. 
      \ee  
      Move the dual graph $\cT_\uvec^*$ back on the original lattice by defining  the graph 
      \be\label{Ttil} \wt\cT_\uvec=-\duale-\cT_\uvec^*.  \ee
      That is,  
      edge  $\{x-\evec_k, x\}\in\wt\cT_\uvec$   
 if and only if edge $\{-x-\duale, -x-\duale+\evec_k\}\in\cT_\uvec^*$.    The point of the next theorem is that $\wt\cT_\uvec$ is also a tree of directed geodesics of an exponential CGM. 

\begin{theorem}\label{t:main2}  Assume \eqref{ass-B}.   Fix $\uvec\in\Uset$.  Then there exists a collection   $\wt Y^\uvec=(\wt Y^\uvec_x)_{x\in\Z^{2}}$  of i.i.d.\ {\rm Exp(1)}  weights on $\OSP$  with these properties.  

{\rm(i)}    $\wt Y^\uvec$ is a Borel function of the weights $Y$ in \eqref{ass-B}  and $\wt Y^\uvec_x(\theta_y\w)=\wt Y^\uvec_{x-y}(\w)$ $\forall x,y\in\Z^2$. 

{\rm(ii)}  $\P$-almost surely   $\wt\cT_\uvec$   is the tree of the  unique $\uvec$-directed semi-infinite geodesics of the LPP process  $\Gpp^{\wt Y^\uvec}$ defined as in  \eqref{v:GY7} with $Y$ replaced by $\wt Y^\uvec$.  

In particular, the tree   $\wt\cT_\uvec$ is    equal in  distribution to $\cT_\uvec$.  The dual graph $\cT^*_\uvec$  is also  $\P$-almost surely a tree. 
\end{theorem}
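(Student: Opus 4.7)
The plan is to construct $\wt Y^\uvec$ as an explicit Borel functional of the Busemann cocycle $B^\uvec$ built in earlier sections via the increment-stationary process, then to verify that its $\uvec$-directed geodesic tree is $\wt\cT_\uvec$. Boundary directions $\uvec \in \{\evec_1, \evec_2\}$ are handled trivially: the tree $\cT_\uvec$ consists of axial half-lines and one may set $\wt Y^\uvec = Y$. So assume $\uvec \in \ri\Uset$ and write $I_x = B^\uvec_{x, x+\evec_1}$, $J_x = B^\uvec_{x, x+\evec_2}$ for the horizontal and vertical Busemann increments. The LPP recursion yields $\min(I_x, J_x) = Y_x$ and the cocycle identity $I_x + J_{x+\evec_1} = J_x + I_{x+\evec_2} = Y_x + \max(J_{x+\evec_1}, I_{x+\evec_2})$, so the $\uvec$-geodesic steps from $x$ in the direction with the smaller increment; consequently both $\cT_\uvec$ and the dual tree $\wt\cT_\uvec$ are encoded by the signs of $I - J$ at each vertex.

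Next I would define $\wt Y^\uvec_x$ by the plaquette-local formula that, upon reflecting the lattice through $-\duale$, makes $B^\uvec$ serve as the Busemann cocycle of a new CGM run in the reversed orientation. Concretely, $\wt Y^\uvec_x$ is the unique algebraic combination of $(I, J)$ on the reflected plaquette forced by the identity $\wt Y^\uvec_x = \min(\wt I_x, \wt J_x)$, where $\wt I, \wt J$ are the Busemann increments of the reversed process read off from the $I, J$ of the original model. Since this formula uses only the translation-covariant quantities $I, J$, the shift-covariance $\wt Y^\uvec_x(\theta_w \w) = \wt Y^\uvec_{x-w}(\w)$ of part (i) follows immediately from shift-covariance of $B^\uvec$, and Borel measurability is automatic.

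The main obstacle will be to show that $\wt Y^\uvec$ is i.i.d.\ Exp(1). For this I would invoke the Burke-type output property of the increment-stationary CGM (developed in earlier sections of the paper): along any down-right lattice path, the Busemann increments together with the bulk weights to the north-east form a product measure with the marginals dictated by $\uvec$. Reflecting this statement through $-\duale$ and rereading it for the candidate reversed process, the plaquette formula delivers, at each site, an Exp(1) random variable independent of all the others, which by design is $\wt Y^\uvec_x$. The interplay between the Burke independence structure and the algebraic identification of the plaquette formula is where the work sits.

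Once \eqref{ass-B} is verified for $\wt Y^\uvec$, Theorem \ref{t:main1} applies to the LPP process $G^{\wt Y^\uvec}$ and produces a unique $\uvec$-directed geodesic tree $\cT'_\uvec$. Identifying $\cT'_\uvec = \wt\cT_\uvec$ is then a site-by-site check: at each vertex $x$ the direction of the new geodesic (the smaller of $\wt I_x, \wt J_x$) is, by construction of the plaquette formula, precisely the edge of $\wt\cT_\uvec$ leaving $x$. The remaining conclusions of the theorem then drop out: $\wt\cT_\uvec \deq \cT_\uvec$ because both are $\uvec$-directed geodesic trees of i.i.d.\ Exp(1) LPP models satisfying \eqref{ass-B}, and since $\cT_\uvec$ is $\P$-a.s.\ a tree by Theorem \ref{t:main1}, so is $\wt\cT_\uvec$, which makes $\cT^*_\uvec$ a tree $\P$-almost surely.
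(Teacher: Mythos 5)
Your proposal follows essentially the same route as the paper: your ``plaquette formula'' is exactly the paper's definition $\wt Y^\uvec_x=\Xw^{\uvec}_{-x}=\Bus{\uvec}_{-x-\evec_1,-x}\wedge \Bus{\uvec}_{-x-\evec_2,-x}$, the i.i.d.\ Exp(1) law comes from the same Burke-type structure of the exponential-$\alpha$ system, and the tree identification is the same reflection/duality bookkeeping. The one step you leave implicit is why the reflected cocycle $\wt B^\uvec_{x,y}=\Bus{\uvec}_{-y,-x}$ is genuinely the Busemann function of $\Gpp^{\wt Y^\uvec}$ (rather than just some compatible cocycle); the paper closes this by applying the uniqueness theorem (Theorem \ref{t:unique}) to the reflected exponential-$\alpha$ system, after which Lemma \ref{Bgeod-lm}(iii) identifies the unique directed geodesic tree.
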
 

 The equality in distribution of $\cT_\uvec$ and the (shifted and reflected) dual graph $\cT^*_\uvec$  was  originally proved by Pimentel (Lemma 2 in \cite{pime-16}).   The weights $\wt Y^\uvec$ are defined in \eqref{Ytil4} below. 

\medskip 

As the final main results, we record some immediate  consequences of the properties of  Busemann functions, to be described in the next section. Distributional properties of the geodesic tree $\cT_\uvec$ depend on a  real parameter $\alpha\in(0,1)$  that is in   bijective correspondence 
    with the  direction  $\uvec=(u_1,1-u_1)\in\ri\Uset$. This bijection is defined    by the equations 
 \be\label{u-a}    \uvec=\uvec(\alpha)=\biggl(    \frac{\alpha^2}{(1-\alpha)^2+\alpha^2}\,,  \frac{(1-\alpha)^2}{(1-\alpha)^2+\alpha^2}\biggr)
 \ \Longleftrightarrow \ 
 \alpha= \alpha(\uvec)= \frac{\sqrt{u_1}}{\sqrt{u_1}+ \sqrt{1-u_1}}. 
  \ee
For example, $\alpha$ gives  the distribution of the first step of the geodesic:
\be\label{a1}
\P\{ \ageodu{x}_1=x+\evec_1\}=\alpha \qquad\forall x\in\Z^2. 
\ee
This statement is proved after Lemma \ref{Bgeod-lm}, after the proof of Theorem \ref{t:main-1}.   Note however that the density of $\evec_1$ steps along the $\uvec$-directed semi-infinite geodesic is $u_1$,  which is  different from $\alpha$,  except in the special case $u_1=\alpha=\tfrac12$.  This points to the fact that understanding distributional properties along a geodesic is challenging.  It is much easier to capture properties transversal to geodesics, as the next theorem illustrates.

Call a point  $z\in\Z^2$ a {\it source} if $z$ does not lie on $\ageodu{x}$ for any $x\ne z$. Call $z$ a {\it coalescence point} if  there exist   $x\ne y$ in $\Z^2\setminus\{z\}$ such that  $\ageodu{z}=  \ageodu{x}\cap\ageodu{y}$.    Equivalently,   $z$ is a source if $\ageodu{z-\evec_1}_1=z-\evec_1+\evec_2$ and $\ageodu{z-\evec_2}_1=z-\evec_2+\evec_1$, while $z$ is a coalescence point if  $\ageodu{z-\evec_1}_1=\ageodu{z-\evec_2}_1=z$.   To complete the list of possibilities,  call $z$ a {\it horizontal point} if $\ageodu{z-\evec_1}_1=z$ but $\ageodu{z-\evec_2}_1=z-\evec_2+\evec_1$, and a {\it vertical  point} if $\ageodu{z-\evec_1}_1=z-\evec_1+\evec_2$ but $\ageodu{z-\evec_2}_1=z$.   See Figure \ref{fig:schv} for an illustration. 

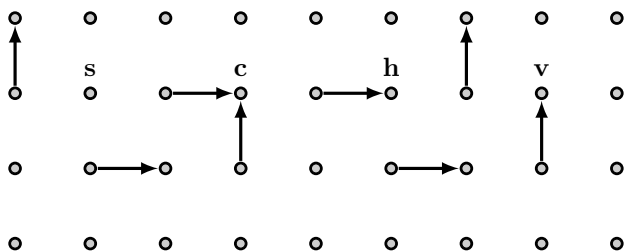
\begin{figure}
\vspace{0.2cm} 
	\begin{center}
		\begin{tikzpicture}[>=latex, scale=0.5]  

			\definecolor{sussexg}{gray}{0.7}

			
			
			\foreach \x in { 0,2,4,6,8,10,12,14,16}{
				\foreach \y in {0,2,4,6}{
					\fill (\x,\y)circle(1.8mm); 
					
					\draw[ fill=sussexg!65](\x,\y)circle(1.2mm);
				}
			}	
			
			\draw[->][line width=1.2 pt] (4.2,4)-- (5.8,4); 
			\draw[->][line width=1.2 pt] (6,2.2)-- (6,3.8); 
			
			\draw[->][line width=1.2 pt] (2.2,2)-- (3.8,2); 
			\draw[->][line width=1.2 pt] (0,4.2)-- (0,5.8); 
			
			\draw[->][line width=1.2 pt] (8.2,4)-- (9.8,4); 
			\draw[->][line width=1.2 pt] (10.2,2)-- (11.8,2); 
			
			\draw[->][line width=1.2 pt] (12,4.2)-- (12,5.8); 
			\draw[->][line width=1.2 pt] (14,2.2)-- (14,3.8); 
			
			\draw(2,4.2)node[above]{\small{$\mathbf s$}};
			\draw(6,4.2)node[above]{\small{$\mathbf c$}};
			\draw(10,4.2)node[above]{\small{$\mathbf h$}};
			\draw(14,4.2)node[above]{\small{$\mathbf v$}};
			
			
		\end{tikzpicture}
	\end{center}
	\caption{ \small  An example of a source ($\mathbf s$),  a coalescence point ($\mathbf c$), a horizontal point ($\mathbf h$), and a vertical point ($\mathbf v$). The arrows point from $x$ to $\ageodu{x}_1$. There is an arrow from each vertex $x$ but only the arrows needed for the definitions are displayed in the figure. }\label{fig:schv} 
 \vspace{0.1cm}  \end{figure}

Fix an antidiagonal  $\cA=\{ (N+j,-j): j\in\Z\}$ of the lattice $\Z^2$, for some $N\in\Z$.   Let 
$\xi_j$ be the random variable that takes one of the values $\{s,c,h,v\}$  to record whether point $(N+j,-j)$ is a source, a coalescence point, a horizontal point, or a vertical point.


\begin{theorem}\label{t:main3}  Assume \eqref{ass-B}.   Fix $\uvec\in\Uset$ and let $\alpha=\alpha(\uvec)$.  
Then $\{ \xi_j\}_{j\in\Z}$ is a stationary  Markov chain  with state space $\{s,c,h,v\}$,  transition matrix 
\be\label{mc}
\mP=\kbordermatrix{
& s&c&h&v \\  s & 0 & 1-\alpha & \alpha &0 \\ c &\alpha &0 &0 &1-\alpha\\ h & 0 & 1-\alpha & \alpha & 0  \\ v &\alpha &0 &0 &1-\alpha 
}   
\ee
and invariant distribution 
\[  \mu(s)=\mu(c)=\alpha(1-\alpha), \quad \mu(h)=\alpha^2, \quad \mu(v)=(1-\alpha)^2.  \] 
\end{theorem}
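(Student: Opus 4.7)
The plan is to reduce the theorem to a two-step window of an i.i.d.\ Bernoulli sequence built from the Busemann function. The key algebraic observation is $z_j-\evec_1=(N+j-1,-j)=z_{j-1}-\evec_2$, so both families $\{z_j-\evec_1\}_{j\in\Z}$ and $\{z_j-\evec_2\}_{j\in\Z}$ coincide with the single antidiagonal $\cA':=\cA-\evec_2=\{w_k:=(N+k,-k-1):k\in\Z\}$, with $w_{j-1}=z_j-\evec_1$ and $w_j=z_j-\evec_2$. Let $\eta_k\in\{1,2\}$ encode the first step of $\ageodu{w_k}$, meaning $\ageodu{w_k}_1=w_k+\evec_{\eta_k}$. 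The definitions of source/coalescence/horizontal/vertical points (see Figure~\ref{fig:schv}) translate verbatim into a deterministic bijection $\xi_j=f(\eta_{j-1},\eta_j)$ with
\[ s\leftrightarrow(2,1),\qquad c\leftrightarrow(1,2),\qquad h\leftrightarrow(1,1),\qquad v\leftrightarrow(2,2). \]

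Next I would show that $(\eta_k)_{k\in\Z}$ is an i.i.d.\ Bernoulli sequence with $\P(\eta_k=1)=\alpha$. Writing $I(x)=\Bus{\uvec}(x,x+\evec_1)$ and $J(x)=\Bus{\uvec}(x,x+\evec_2)$ for the horizontal and vertical Busemann increments, the characterization of the $\uvec$-directed geodesic via $\Bus{\uvec}$ (proved earlier) gives $\eta_k=1\iff I(w_k)<J(w_k)$. Consider the staircase down-right lattice path $\ldots,w_{k-1},z_k,w_k,z_{k+1},w_{k+1},\ldots$ that alternates rightward and downward unit steps; its successive Busemann increments are $\ldots,I(w_{k-1}),-J(w_k),I(w_k),-J(w_{k+1}),\ldots$. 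The Burke-type stationarity of $\Bus{\uvec}$, a core consequence of the paper's increment-stationary construction, asserts that these increments form a mutually independent family of exponential random variables with rates summing to $1$. The common rate of the $I$'s must then equal $\alpha$ (and of the $J$'s, $1-\alpha$), since the horizontal-step probability $\alpha/(\alpha+(1-\alpha))$ must agree with \eqref{a1}. Consequently the pairs $(I(w_k),J(w_k))$ are i.i.d., and $(\eta_k)_k$ is i.i.d.\ Bernoulli$(\alpha)$.

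Given this, the rest is elementary bookkeeping. The value of $\xi_j$ already determines $\eta_j$: namely $\eta_j=1$ when $\xi_j\in\{s,h\}$ and $\eta_j=2$ when $\xi_j\in\{c,v\}$. Since $\eta_{j+1}$ is independent of $(\eta_i)_{i\le j}$, the law of $\xi_{j+1}=f(\eta_j,\eta_{j+1})$ depends on the past only through $\xi_j$, which yields the Markov property. From any state with $\eta_j=1$ the chain moves to $h$ with probability $\alpha$ and to $c$ with probability $1-\alpha$; from any state with $\eta_j=2$ it moves to $s$ with probability $\alpha$ and to $v$ with probability $1-\alpha$. This is precisely the matrix $\mP$. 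Stationarity of $(\xi_j)$, and the formula for $\mu$, follow from $\mu=f_*\bigl(\text{Bern}(\alpha)^{\otimes 2}\bigr)$.

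The main obstacle is not this combinatorial check but the two distributional inputs about $\Bus{\uvec}$: the identification of $\eta_k$ with the inequality $I(w_k)<J(w_k)$, and the Burke property that the $I$- and $J$-increments along any down-right path form an independent exponential family with the correct rates. Both are available once the Busemann function has been constructed through the increment-stationary growth process, so modulo this prior development the theorem reduces to the few-line computation above.
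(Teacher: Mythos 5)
Your proposal is correct and follows essentially the same route as the paper: the paper's proof defines $a_j$ (your $\eta_j$) by comparing the two Busemann increments at $(N+j,-j-1)$, invokes Definition \ref{v:d-exp-a}(a) along the same staircase down-right path to get i.i.d.\ Bernoulli$(\alpha)$ variables, and identifies $\xi_j$ with the two-step window $(a_{j-1},a_j)$ via the same bijection. The only cosmetic difference is that the exponential rates $\alpha$ and $1-\alpha$ are taken directly from Theorem \ref{t:buse}(i) rather than inferred backwards from \eqref{a1}.
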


In particular,  both sources and coalescence points of  semi-infinite geodesics   in direction $\uvec=(u_1, 1-u_1)$   have density 
\[    \alpha(\uvec)(1-\alpha(\uvec))=  \frac{\sqrt{u_1(1-u_1)}}{\bigl(\sqrt{u_1}+ \sqrt{1-u_1}\,\bigr)^2}  \]
on the lattice.  
This density is maximized at $1/4$ by the diagonal direction $\uvec=(\tfrac12,\tfrac12)$.


\subsection*{Organization of the rest of the paper}    As mentioned, the purpose of the paper is to present  a particular proof of Theorems \ref{t:main-1}--\ref{t:main2}.   This proof has three main steps.  
 \begin{enumerate}[(i)] \itemsep=2pt
\item
 Construction of the increment-stationary LPP process.
\item Proof of the existence and properties of Busemann functions, by using couplings with the increment-stationary LPP  and monotonicity.
\item Control of geodesics with the Busemann functions. 
\end{enumerate}
Full details of steps (i) and (ii) are omitted from this paper because these steps are spelled out in  lecture notes \cite{sepp-cgm-18}.  We review these arguments briefly in Section \ref{s:bus1}.  The work of this paper goes towards step (iii).    This is done in  Section \ref{s:bg} that develops Busemann geodesics and proves the  theorems of  Section \ref{s:main}.   A final Section \ref{s:cif} relates the geodesics constructed in Section \ref{s:main} to competition interfaces.  

  \section{Increment-stationary LPP and Busemann functions}  
  \label{s:bus1}
  
  \subsection{Preliminaries} 
A {\it down-right path} is a bi-infinite sequence $\cY=(y_k)_{k\in\Z}$  in   $\Z^2$ such that    $y_k-y_{k-1}\in\{\evec_1,-\evec_2\}$ for all $k\in\Z$.  The  lattice decomposes  into a disjoint union  $\Z^2=\cH^-\cup\cY\cup\cH^+$  where   the two regions are 
 \be\label{H-}   \cH^-=\{x\in \Z^2:  \exists j\in\Z_{>0}  \text{ such that  } x+j(\evec_1+\evec_2)\in \cY\} \ee
 to the left of  and below $\cY$   and 
 \be\label{H+} \cH^+=\{x\in \Z^2:  \exists j\in\Z_{>0}  \text{ such that } x-j(\evec_1+\evec_2)\in \cY\}  \ee
 to the right of and above $\cY$. 

It will be convenient to summarize certain  properties  of systems of exponential weights  in the following definition.
 
 \begin{definition} \label{v:d-exp-a} 
Let $0<\alpha<1$.   A  stochastic process 
$ \{ \zeta_{x} ,\,   I_{x}, \, J_{x},  \,\eta_{x}  :  x\in \Z^2\}$ 
 is an {\rm exponential-$\alpha$  last-passage percolation system}  if the following properties  {\rm (a)--(b)} hold. \\[-7pt] 
 \begin{enumerate}[{\rm(a)}]\itemsep=7pt 
 \item  The process is stationary under lattice translations  and has  marginal distributions  
 \be\label{w19}\begin{aligned}
\zeta_{x},\, \eta_x \sim \text{\rm Exp}(1),  \quad I_{x}\sim \text{\rm Exp}(\alpha)  , \quad \text{ and }  \quad  J_{x}\sim \text{\rm Exp}(1-\alpha)  .  
\end{aligned}\ee 
  For any down-right path $\cY=(y_k)_{k\in\Z}$  in   $\Z^2$, the random variables 
 \be\label{v:Y4}   
 \{\eta_z:  z\in\cH^-\}, \quad  \{  \tincr(\{y_{k-1},y_k\}): k\in\Z\}, \quad\text{and}\quad 
 \{  \zeta_x: x\in\cH^+\} 
 \ee
 are all mutually independent, where the undirected edge variables $\tincr(e)$ are defined  as 
  \be\label{v:tincr6}   \tincr(e)= \begin{cases}  I_x &\text{if $e=\{x-\evec_1,x\}$}\\ J_x &\text{if $e=\{x-\evec_2,x\}$.} \end{cases}  \ee

 \item  The following equations  are in force at all $x\in\Z^2$: 
 \begin{align}
\label{IJw5.1.7}  \eta_{x-\evec_1-\evec_2}&=I_{x-\evec_2}\wedge J_{x-\evec_1} 
\\
\label{IJw5.2.7}  I_x&=\zeta_x+(I_{x-\evec_2}-J_{x-\evec_1})^+ 
\\
\label{IJw5.3.7} J_x&=\zeta_x+(I_{x-\evec_2}-J_{x-\evec_1})^-. 
\end{align}

 \end{enumerate}
\qedex\end{definition} 

Equations \eqref{IJw5.2.7}--\eqref{IJw5.3.7}  imply this counterpart of \eqref{IJw5.1.7}:  
\be \label{IJw5.4.7}  \zeta_{x}=I_{x}\wedge J_{x}.  \ee

 An exponential-$\alpha$ LPP  system can be constructed explicitly in a quadrant as follows.    Assume given independent weights $\{I_{i\evec_1}:i\ge 1\}$ on the $x$-axis,   $\{J_{j\evec_2}:j\ge 1\}$ on the $y$-axis,  and $\{\zeta_x: x\in\Z^2_{>0}\}$ in the bulk (interior) of the first quadrant, all with marginal distributions \eqref{w19}.   Use equations \eqref{IJw5.1.7}--\eqref{IJw5.3.7} to define inductively in the northeast direction weights  $\{\eta_{x-\evec_1-\evec_2}, I_x, J_x: x\in\Z^2_{>0}\}$.   Then property (a)  from Definition \ref{v:d-exp-a} above can be verified inductively.   Now   $\{ \zeta_{x+\evec_1+\evec_2} ,\,   I_{x+\evec_1}, \, J_{x+\evec_2},  \,\eta_{x}  :  x\in \Z_{\ge0}^2\}$ is an   exponential-$\alpha$ LPP  system restricted to a  quadrant.

 Furthermore, if we define the LPP process $\{G^\alpha_x:  x\in\Z^2_{\ge0}\}$ by   $G^\alpha_0=0$, 
 \be\label{G-a1}   G^\alpha_{k\evec_1}=\sum_{i=1}^k I_{i\evec_1}\ \text{ for }k\ge 1, 
 \quad
 G^\alpha_{\ell\evec_2}=\sum_{j=1}^\ell J_{j\evec_2}\ \text{ for }\ell\ge 1,   \ee
 and inductively 
 \be\label{G-a}     G^\alpha_x = \zeta_x +  G^\alpha_{x-\evec_1} \vee G^\alpha_{x-\evec_2} \quad\text{for } x\in\Z^2_{>0}, \ee 
then $I$ and $J$ are the increments:
\be\label{G-a2}   I_x= G^\alpha_x -  G^\alpha_{x-\evec_1}   \quad\text{and}\quad 
J_x= G^\alpha_x -  G^\alpha_{x-\evec_2}.  \ee
 All this is elementary to verify and contained in Theorem 3.1 of \cite{sepp-cgm-18}.     $\{G^\alpha_x:  x\in\Z^2_{\ge0}\}$ is an {\it increment-stationary} LPP process.  
 
\medskip

To produce an  exponential-$\alpha$  LPP system   on the full lattice as a function of the i.i.d.\ weights $Y$ of assumption \eqref{ass-B},  we take limits of LPP increments in the direction $\uvec(\alpha)$ determined by \eqref{u-a}.   For the statement we need a couple  more definitions. 

 Define an order among  direction vectors  $\uvec=(u_1, 1-u_1)$ and  $\vvec=(v_1,1-v_1)$ in $\Uset$ according to the $\evec_1$-coordinate:   
  \be\label{u-order} \uvec\prec\vvec\quad\text{if}\quad u_1< v_1.  \ee
  Geometrically:   $\uvec\prec\vvec$ if $\vvec$ is below  and to the right of  $\uvec$.  
    Bijection \eqref{u-a} preserves this order.

\begin{definition}
\label{def:cK}
A measurable function $B:\Omega\times\Z^2\times\Z^2\to\R$ is   a {\rm covariant   cocycle}
if it satisfies these two   conditions for $\P$-a.e.\ $\w$ and  all $x,y,z\in\Z^2$:
\begin{align*}
B(\w, x+z,y+z)&=B(\theta_z\w, x,y)  \qquad \text{{\rm(}stationarity{\rm)}}  \\
B(\w,x,y)+B(\w,y,z)&=B(\w, x,z) \qquad  \;\  \ \text{{\rm(}additivity{\rm)}.}
\end{align*}
$\cK$ denotes the space of covariant cocycles $B$ such that  $\E\abs{B(x,y)} <\infty$   $\forall x,y\in\Z^2$. 
\end{definition}

\subsection{Busemann functions} 
Existence and properties of Busemann functions are summarized in the next theorem.

 \begin{theorem}\label{t:buse}  Assume \eqref{ass-B}.  Then  
 for each  $\uvec\in\ri\Uset$  there exist a covariant   cocycle $\Bus{\uvec}=(\Bus{\uvec}_{x,y})_{x,y\in\Z^2}$  and a family of random weights  $\Xw^\uvec=(\Xw^{\uvec}_x)_{x\in\Z^2}$  on $\OSPTh$   with the following properties.  
 \begin{enumerate}\itemsep=3pt
 \item[{\rm(i)}]   For each $\uvec\in\ri\Uset$,  process 
  \[ \{  \Xw^{\uvec}_x, \, \Bus{\uvec}_{x-\evec_1,x}, \, \Bus{\uvec}_{x-\evec_2,x}, \,\Yw_x:  x\in\Z^2\}  \] 
 is an exponential-$\alpha(\uvec)$ last-passage system as described in Definition \ref{v:d-exp-a}.    With $\P$-probability one, part {\rm(b)} of Definition \ref{v:d-exp-a} holds simultaneously for all $\uvec\in\ri\Uset$. 

 \item[{\rm(ii)}]     There exists a single event $\Omega_0$ of full probability such that for all $\w\in\Omega_0$,   all $x\in\Z^2$ and all $\uvec\prec\vvec$ in $\ri\Uset$ we have the inequalities 
 \be\label{v:853.1} 
\Bus{\uvec}_{x,x+\evec_1}(\w) \ge  \Bus{\vvec}_{x,x+\evec_1}(\w) \quad\text{and}\quad 
\Bus{\uvec}_{x,x+\evec_2}(\w) \le  \Bus{\vvec}_{x,x+\evec_2}(\w).   
\ee
Furthermore,  for all $\w\in\Omega_0$ and $x,y\in\Z^2$,  the function $\uvec\mapsto \Bus{\uvec}_{x,y}(\w)$ is right-continuous with left limits under the ordering \eqref{u-order}. 

 \item[{\rm(iii)}]     For each fixed $\vvec\in\ri\Uset$ there exists an event $\Omega^{(\vvec)}_1$ of full probability such that the following holds: for each $\w\in\Omega^{(\vvec)}_1$  and any  sequence $v_n\in\Z^2$ such that $\abs{v_n}_1\to\infty$ and 
 \be\label{v:853.3}     \lim_{n\to\infty}  \frac{v_n}{\abs{v_n}_1}  = \vvec ,  
 \ee
we have the limits  
 \be\label{v:855}   
 \Bus{\vvec}_{x,y}(\w)  =\lim_{n\to\infty} [ G_{x, v_n}(\w)-G_{y,v_n}(\w)]  \qquad\forall x,y\in\Z^2. \ee
 Furthermore,  for all $\w\in\Omega^{(\vvec)}_1$ and $x,y\in\Z^2$,  
 \be\label{v:855.7}   \lim_{\uvec\to\vvec}\Bus{\uvec}_{x,y}(\w)=\Bus{\vvec}_{x,y}(\w). \ee 
 
  \end{enumerate} 
 
 \end{theorem}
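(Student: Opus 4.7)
The plan is to follow the construction via increment-stationary LPP outlined in Definition 3.1 and the surrounding discussion. I would organize the proof around constructing all the cocycles $B^{\uvec}$ simultaneously as a coupled family indexed by $\uvec \in \ri\Uset$, then extracting the ergodic properties (iii) from this coupling.

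First I would fix $\alpha \in (0,1)$ and produce the stationary LPP system of Definition 3.1 on the entire lattice $\Z^2$ (not just the quadrant constructed explicitly in (3.6)--(3.10)). The natural route is: for each $m$, run the quadrant construction with boundaries placed on the south-west staircase through $-m(\evec_1+\evec_2)$, using i.i.d.\ Exp($\alpha$), Exp($1-\alpha$), Exp(1) weights; then send $m \to \infty$ and use Kolmogorov extension together with the Burke-type independence of increments along any down-right path to obtain a full-plane system. The resulting horizontal and vertical increments $B^{\uvec}_{x-\evec_1,x}$ and $B^{\uvec}_{x-\evec_2,x}$ (with $\uvec=\uvec(\alpha)$) define the cocycle via additivity, and the weights $X^{\uvec}_x$ are defined by (3.4). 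Stationarity and additivity are then immediate from the translation invariance and the telescoping structure of the recursion.

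Next I would couple the family $\{B^{\uvec}\}_{\uvec \in \ri\Uset}$ on a single probability space together with the i.i.d.\ weights $Y$. The cleanest way is via Harris-type monotone couplings of the Exp($\alpha$) and Exp($1-\alpha$) boundary data using common uniform random variables, combined with shared bulk weights $\zeta_x = Y_x$; this forces both boundary families, and hence (by induction in the northeast direction through (3.3)--(3.5)) the bulk increments, to be monotone in $\alpha$ in the sense of (3.13). Restricting first to a countable dense set $D \subset \ri\Uset$, I would take a single full-probability event $\Omega_0$ on which the cocycle, independence, and monotonicity properties all hold at every $\uvec \in D$, and then extend to all $\uvec \in \ri\Uset$ by right-continuity: set $B^{\uvec}_{x,y}(\w) = \lim_{D \ni \uvec' \searrow \uvec} B^{\uvec'}_{x,y}(\w)$ under the order $\prec$, which exists by monotonicity. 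A short argument using $\E B^{\uvec}_{x,x+\evec_1} = 1/\alpha(\uvec)$, continuous in $\uvec$, shows that the extension satisfies the stated exponential distribution and does not lose the independence of (3.12). This yields (i) and (ii) together.

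For part (iii), the idea is a stationary sandwich. Fix $\vvec \in \ri\Uset$ and pick rationals $\uvec^- \prec \vvec \prec \uvec^+$ in $D$. Since $\gpp$ is strictly concave and the characteristic direction of the stationary LPP with slope $\uvec^\pm$ is exactly $\uvec(\alpha(\uvec^\pm))$, the shape theorem (Theorem 1.1) applied to both the i.i.d.\ and the stationary LPP gives, along any sequence $v_n/|v_n|_1 \to \vvec$, that the stationary increments with slope $\uvec^-$ and $\uvec^+$ sandwich $G_{x,v_n}-G_{y,v_n}$ up to $o(1)$ error. Together with the monotonicity (3.13), this forces
\[
B^{\uvec^+}_{x,y} \le \liminf_n [G_{x,v_n}-G_{y,v_n}] \le \limsup_n [G_{x,v_n}-G_{y,v_n}] \le B^{\uvec^-}_{x,y}
\]
on a full-probability event. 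Letting $\uvec^\pm \to \vvec$ through $D$, the monotone limits give (3.15) and simultaneously (3.16), since the right- and left-limit cocycles must then coincide with the Busemann limit at $\vvec$.

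The main obstacle is the passage from the countable-family statement to the full interior of $\Uset$: the sandwich step in the preceding paragraph is what closes the gap between right-continuity (which comes for free from monotonicity) and true continuity at any fixed $\vvec$. Equivalently, this is the point where one must rule out any macroscopic jump in the limit shape's differentiability at $\vvec$, which is why strict concavity of $\gpp$ enters essentially through the identification of characteristics in the stationary LPP.
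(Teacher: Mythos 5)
Your overall strategy---increment-stationary systems, monotone coupling, a dense set of directions followed by right limits, and a sandwich of i.i.d.\ increments between stationary ones---matches the paper's (which itself only sketches this proof and defers the details to \cite{sepp-cgm-18}). But two steps as you have written them would fail. The first is the coupling. Theorem \ref{t:buse}(i) requires the \emph{given} weights $\Yw$ to be the $\eta$-marginal of the exponential-$\alpha(\uvec)$ system, i.e.\ the identity $\Yw_x=\Bus{\uvec}_{x,x+\evec_1}\wedge\Bus{\uvec}_{x,x+\evec_2}$ of \eqref{914} must hold with the prescribed field $\Yw$, simultaneously for all $\uvec$. In the quadrant construction the $\eta$-weights are \emph{outputs} of the recursion \eqref{IJw5.1.7}, not inputs, so you cannot secure this by declaring ``shared bulk weights $\zeta_x=Y_x$''---which in any case confuses the roles: $\zeta$ corresponds to the northeast weights $\Xw^\uvec$, while $\Yw$ plays the role of $\eta$. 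A Kolmogorov-extension/Harris construction yields a family with the correct joint \emph{distribution}, but distinct parameters $\alpha$ then produce distinct $\eta$-fields $I\wedge J$, so the family is not anchored to the single given $\Yw$, and the Borel-measurability of $\Bus{\uvec}$ and $\Xw^\uvec$ in $\Yw$ (needed later, e.g.\ in Theorem \ref{t:main2}) is lost. The paper avoids this by \emph{defining} $\Bus{\vvec}$ as the limit \eqref{v:855} of increments of the one LPP process $G$ built from $\Yw$; the system property and \eqref{914} are inherited from the corresponding identities for finite-$n$ increments, and the monotonicity \eqref{v:853.1} follows directly from Lemma \ref{lm-til} applied to those increments rather than from a coupling of boundary data.

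The second gap is in your sandwich step: you invoke the shape theorem to conclude that stationary increments bracket $G_{x,v_n}-G_{y,v_n}$ ``up to $o(1)$ error.'' The shape theorem \eqref{sh89} controls passage times only to precision $o(\abs{v_n}_1)$, whereas the increments in question are $O(1)$, so it cannot deliver an $o(1)$ comparison. The mechanism that actually works is the deterministic planar monotonicity of increments (Lemma \ref{lm-til}): placing the stationary $I$ and $J$ weights on the north and east boundary of the rectangle $[x,v_n+\evec_1+\evec_2]$ gives pathwise inequalities between the i.i.d.\ increment and the boundary-augmented stationary increment, and the law of large numbers enters only to show that the exit point of the stationary geodesic falls on the favorable side with probability tending to one when the stationary parameter is tilted slightly off $\vvec$. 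Your appeal to strict concavity is pointing at the right phenomenon, but the quantitative bridge from the shape theorem to increment comparison is missing and cannot be supplied as stated.
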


 \medskip 

\begin{remark}\label{r:pm}   The process $\uvec\mapsto \Bus{\uvec}$ is globally cadlag (part (ii)) and at each fixed $\vvec$ limit \eqref{v:855.7} holds almost surely.  For each $x,y\in\Z^2$, $\uvec\mapsto \Bus{\uvec}_{x,y}$ is in fact a jump process \cite{fan-sepp-arxiv}.   The cadlag property is  merely a convention.  For certain purposes it can be useful to work with  two processes $\Bus{\uvec}_+(x,y)$   and $\Bus{\uvec}_-(x,y)$ such that     $\uvec\mapsto \Bus{\uvec}_+$ is right-continuous with left limits,    $\uvec\mapsto \Bus{\uvec}_-$ is left-continuous with right limits,  and  $\Bus{\uvec}_+=\Bus{\uvec}_-$ almost surely for a given $\uvec$.   Our results in Theorems \ref{t:main1}--\ref{t:main3}  are almost sure statements  for a fixed $\uvec$, and hence we could use either process $\Bus{\uvec}_+$  or  $\Bus{\uvec}_-$.  
 \hfill$\triangle$\end{remark}

Part (i) of  Theorem \ref{t:buse}   together with \eqref{IJw5.1.7} and \eqref{IJw5.4.7} imply 
\be\label{914}     \Yw_x=\Bus{\uvec}_{x,\,x+\evec_1} \wedge \Bus{\uvec}_{x,\,x+\evec_2}  
\ee
 and 
\be\label{915}     \Xw^{\uvec}_x=\Bus{\uvec}_{x-\evec_1,\,x} \wedge \Bus{\uvec}_{x-\evec_2,\,x} .  
\ee

 From the exponential distributions of $\Bus{\uvec}_{0,\evec_1}$ and $\Bus{\uvec}_{0,\evec_2}$ and the explicit formula \eqref{g} of  the shape function follows 
\be\label{EB8}     \bigl( \, \E[\Bus{\uvec}_{0,\evec_1}]\,, \E[\Bus{\uvec}_{0,\evec_2}]\,\bigr)
=\Bigl(\,  \frac1{\alpha(\uvec)} \,, \frac1{1-\alpha(\uvec)}\,\Bigr) = \nabla\gpp(\uvec). \ee  
This is natural since by  \eqref{v:855}   $\Bus{\uvec}$ can be viewed as the  ``microscopic gradient'' of the passage time. 

The next theorem gives strong uniqueness of the process $\{\Bus{\uvec},\Xw^\uvec\}$.  

  \begin{theorem}\label{t:unique}   Assume \eqref{ass-B} and let $\{\Bus{\uvec},\Xw^\uvec: \uvec\in\ri\Uset\}$ be the process given by Theorem \ref{t:buse}.  Fix $0<\rho<1$. 
Suppose that on $\OSP$ there are random variables $(U_x, A_{x-\evec_1,x}, A_{x-\evec_2,x})_{x\in\Z^2}$ such that $\{U_x,  A_{x-\evec_1,x},  A_{x-\evec_2,x}, \Yw_x:  x\in\Z^2\} $ 
 is an exponential-$\rho$ last-passage system as described in Definition \ref{v:d-exp-a}. 
 Then  $U_x= \Xw^{\uvec(\rho)}_x$, $A_{x-\evec_1,x}=\Bus{\uvec(\rho)}_{x-\evec_1,x}$ and $A_{x-\evec_2,x}=\Bus{\uvec(\rho)}_{x-\evec_2,x}$ for all $x$, $\P$-almost surely. 
%
%
%
%
%
%
%
  \end{theorem}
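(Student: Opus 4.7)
The plan is to reduce Theorem~\ref{t:unique} to the Busemann limit characterization in Theorem~\ref{t:buse}(iii). Applying \eqref{IJw5.1.7} to the system $(U,A,Y)$ gives $U_x = A_{x-\evec_1,x}\wedge A_{x-\evec_2,x}$, exactly as in \eqref{915}, so $U$ is a deterministic function of $A$ and it suffices to prove $A_{x,y} = B^{\uvec(\rho)}_{x,y}$ $\P$-a.s.\ for every pair $x,y\in\Z^2$; cocycle additivity further reduces this to nearest-neighbor pairs.

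The main tool is the variational identity $Y_z = A_{z,z+\evec_1}\wedge A_{z,z+\evec_2}$, a cocycle rewriting of \eqref{IJw5.1.7}, which gives $A_{z,z+\evec_k}\ge Y_z$. Summing along any up-right path $(x_i)_{i=0}^n$ from $x$ to $v$ and using additivity of $A$, one gets $A_{x,v} = \sum_{i=1}^n A_{x_{i-1},x_i} \ge \sum_{i=0}^{n-1} Y_{x_i}$, and maximizing the right-hand side over paths yields the lower bound $A_{x,v} \ge \Gpp_{x,v} - Y_v$. For the complementary upper bound I would use the semi-infinite ``$A$-geodesic'' $\pi^A_x=(x^A_k)_{k\ge 0}$ started at $x$, defined so that at each $x^A_{k-1}$ the next step goes in the coordinate direction attaining the minimum in $Y_{x^A_{k-1}} = A_{x^A_{k-1},x^A_{k-1}+\evec_1}\wedge A_{x^A_{k-1},x^A_{k-1}+\evec_2}$. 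Along $\pi^A_x$ every edge satisfies $A_{x^A_{k-1},x^A_k} = Y_{x^A_{k-1}}$, so $A_{x,x^A_n} = \sum_{i=0}^{n-1} Y_{x^A_i}$ is the $Y$-weight of one specific path to $x^A_n$ and is therefore at most $\Gpp_{x,x^A_n} - Y_{x^A_n}$; together with the variational lower bound this gives the exact identity $A_{x,x^A_n} = \Gpp_{x,x^A_n} - Y_{x^A_n}$ for every $n\ge 1$.

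To pass from this identity along $\pi^A_x$ to the desired equality for arbitrary $x,y$, I would invoke planar coalescence. Because $(U,A,Y)$ has the same joint distributional structure as the canonical $(X^{\uvec(\rho)},B^{\uvec(\rho)},Y)$, the paths $\pi^A_x$ agree in law with the Busemann geodesics of $B^{\uvec(\rho)}$ and thus inherit the asymptotic direction $\uvec(\rho)$; planar monotonicity of up-right paths then forces $\pi^A_x$ and $\pi^A_y$ to meet at a finite random site, beyond which they share a common tail. Choosing $v_n$ far along this shared tail and applying the identity at both starting points,
\[
A_{x,y} = A_{x,v_n}-A_{y,v_n} = \bigl[\Gpp_{x,v_n} - Y_{v_n}\bigr] - \bigl[\Gpp_{y,v_n} - Y_{v_n}\bigr] = \Gpp_{x,v_n} - \Gpp_{y,v_n},
\]
whose limit is $B^{\uvec(\rho)}_{x,y}$ by Theorem~\ref{t:buse}(iii), yielding $A_{x,y} = B^{\uvec(\rho)}_{x,y}$ a.s.

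The main obstacle is the coalescence step. One cannot simply invoke the coalescence result of Theorem~\ref{t:main1}, because the proof of that theorem ultimately rests on the Busemann machinery that Theorem~\ref{t:unique} itself underwrites. The remedy is to argue coalescence of $\pi^A_x$ and $\pi^A_y$ directly from the arrow structure of $A$: two up-right paths with the same asymptotic direction in $\Z^2$ cannot stay disjoint indefinitely by a planar crossing argument, and the deterministic arrow rule at a first shared vertex forces them to merge there. A secondary subtlety is the identification of the asymptotic direction of $\pi^A_x$ as $\uvec(\rho)$; this uses ergodicity together with the distributional equality of $(U,A,Y)$ with $(X^{\uvec(\rho)},B^{\uvec(\rho)},Y)$, avoiding any circularity with Theorem~\ref{t:main1}.
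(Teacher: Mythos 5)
Your overall route is genuinely different from the paper's. The paper proves Theorem \ref{t:unique} by rerunning the coupling argument used to construct the Busemann functions: the given increments $A$ generate an increment-stationary boundary LPP process, and planar monotonicity (Lemma \ref{lm-til}) sandwiches the $A$-increments between north--east boundary LPP increments with parameters $\lambda<\rho<\lambda'$, which are then squeezed onto the Busemann limit. Your plan instead characterizes $A$ variationally through its own geodesics ($A_{x,v}\ge \Gpp_{x,v}-Y_v$ with equality along $\pi^A_x$) and then feeds the Busemann limit \eqref{v:855} along $\pi^A_x$. That skeleton is sound, and the preliminary steps check out: additivity of $A$ around unit squares follows from \eqref{IJw5.2.7}--\eqref{IJw5.3.7}, and the $\uvec(\rho)$-directedness of $\pi^A_x$ does follow from the fact that Definition \ref{v:d-exp-a} pins down the joint law of the arrow field of $A$, so that $\pi^A_x$ is equal in law to $\bgeodu{x}$ with $\uvec=\uvec(\rho)$ and inherits \eqref{bgeod98}. (Note you cannot instead apply Theorem \ref{cc-ergthm} directly to $A$, since Definition \ref{v:d-exp-a} gives only distributional stationarity, not $\theta$-covariance.)

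The genuine gap is the coalescence step, and your proposed repair of it is false. Two up-right paths in $\Z^2$ with the same asymptotic direction can perfectly well remain disjoint forever -- they simply run parallel -- so there is no ``planar crossing argument'' forcing $\pi^A_x$ and $\pi^A_y$ to meet. Ruling this out almost surely is exactly the content of event (iii) of \eqref{bgeod100}, whose proof occupies Lemma \ref{bgeod-lm8} and Theorem \ref{t:bg105} and relies on the dual geodesic tree and the midpoint problem; it is not available here without circularity, and it is certainly not a soft topological fact. Fortunately you do not need coalescence at all. From additivity, $A_{x,y}=A_{x,v}-A_{y,v}$ for any $v$. Taking $v=v_n$ along $\pi^A_x$ (so that eventually $v_n\ge x\vee y$, since $\uvec(\rho)\in\ri\Uset$) gives $A_{x,v_n}=\Gpp_{x,v_n}-Y_{v_n}$ exactly and $A_{y,v_n}\ge \Gpp_{y,v_n}-Y_{v_n}$, hence $A_{x,y}\le \Gpp_{x,v_n}-\Gpp_{y,v_n}\to \Bus{\uvec(\rho)}_{x,y}$ by \eqref{v:855} (which holds on $\Omega_1^{(\uvec(\rho))}$ for arbitrary, even random, admissible sequences $v_n$). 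Running the same argument along $\pi^A_y$ gives the reverse inequality. With that substitution your argument closes, and it becomes an attractive alternative to the paper's sandwich-by-monotonicity proof.
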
 
  
 
 
 \subsection{The idea of the proof of Theorems \ref{t:buse} and \ref{t:unique}}   These theorems are proved in detail in Section 4 of lecture notes \cite{sepp-cgm-18}.  This type of proof was introduced first in the context of the positive-temperature log-gamma polymer in \cite{geor-rass-sepp-yilm-15}.  We   sketch   the main idea.   The essential point  for the message of this paper is that coalescence of geodesics is {\it not used} in the proof, only couplings, monotonicity, and properties of the increment-stationary LPP processes of \eqref{G-a}.    
 
 In  \eqref{v:855} let $\vvec=\uvec(\alpha)$ defined by \eqref{u-a}.
Construct   an exponential-$\lambda$ LPP system in the  quadrant  $x+\Z^2_{\ge0}$, as explained below \eqref{IJw5.4.7}. 
  Use the i.i.d.\ Exp(1) $\eta$-weights of this construction (defined by \eqref{IJw5.1.7})    to define   last-passage times   $G_{x,y}$.   Consider an $\evec_1$-increment $G_{x,v_n}-G_{x+\evec_1,v_n}$  in \eqref{v:855}.   
  Place the $I$ weights on the north and the $J$ weights on the east boundary of the rectangle $[x,v_n+\evec_1+\evec_2]$.  Use this augmented system to define  last-passage times   $G_{x,v_n+\evec_1+\evec_2}^{\lambda, NE}$, where superscript NE indicates that the boundary weights are on the north and east.    Then, by planar monotonicity (Lemma \ref{lm-til})  and by choosing $\lambda$ suitably,  the  upper bound 
  \[   G_{x,v_n}-G_{x+\evec_1,v_n} \le G_{x,v_n+\evec_1+\evec_2}^{\lambda, NE}-G_{x+\evec_1,v_n+\evec_1+\evec_2}^{\lambda, NE}
  \]  
  holds with high probability for large $n$.  The right-hand increment above can be controlled because it comes from  an increment-stationary LPP process.  
 Similar reasoning yields a lower bound
   \[   G_{x,v_n}-G_{x+\evec_1,v_n} \ge G_{x,v_n+\evec_1+\evec_2}^{\rho, NE}-G_{x+\evec_1,v_n+\evec_1+\evec_2}^{\rho, NE}
  \]  
   with a different parameter $\rho$.  After sending $v_n$ to infinity, the  bounds are brought together by letting $\lambda$ and $\rho$ converge to $\alpha$.   
 
  This establishes the almost sure limit   \eqref{v:855}   for a countable dense set of  directions $\vvec$.  Properties of the resulting processes $\Bus{\vvec}$ are derived   from monotonicity and the  increment-stationary LPP processes.  The construction of the full process $\{\Bus{\uvec}:\uvec\in\ri\Uset\}$ is completed by taking right limits as $\vvec\searrow\uvec$ to get  cadlag paths in the parameter $\uvec$.   This proves Theorem \ref{t:buse}.
 
 To prove the uniqueness in Theorem \ref{t:unique}, the reasoning above is repeated: this time increment  variables $A_{x-\evec_k,x}$ are given, and planar monotonicity is used to sandwich them between Busemann limits from Theorem \ref{t:buse}.  


%
 

\subsection{Midpoint problem} 
We quote one more result from \cite{sepp-cgm-18}  that is a corollary of the Busemann limits.    We use this fact in the proof of Theorem \ref{t:bg105} below to show the nonexistence of   bi-infinite $\Bus{\uvec}$-geodesics. 
Let  $\pi^{x,y}$ 
denote the (almost surely unique)    geodesic  for $\Gpp_{x,y}$ defined by \eqref{v:GY7}.  



%

\begin{theorem} \label{th:midpt}    Assume \eqref{ass-B} and fix $\uvec\in\ri\Uset$.   Let $u_n\le z_n\le v_n$ be three  sequences on $\Z^2$ that satisfy the following conditions:  $u_n$ and $v_n$ can be random but   $z_n$ is not  {\rm(}that is, $u_n$ and $v_n$ can be measurable  functions of $\w$ but $z_n$ does not depend on $\w${\rm)},   $\abs{v_n-z_n}_1\to\infty$,  $\abs{z_n-u_n}_1\to\infty$,  and 
\[ \lim_{n\to\infty} \frac{v_n-z_n}{\abs{v_n-z_n}_1} = 
\lim_{n\to\infty} \frac{z_n-u_n}{\abs{z_n-u_n}_1} = \uvec. \]
Then  $\ddd\lim_{n\to\infty} \P\{ z_n\in \pi^{u_n,v_n} \}=0$. 

\end{theorem}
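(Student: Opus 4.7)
The plan is to reduce by translation to $z_n=0$ (legitimate because $z_n$ is deterministic and $\P$ is shift-invariant), and then study the unique antidiagonal crossing of $\pi^{u_n,v_n}$ through the origin. Writing $w_k:=(k,-k)$ and $\cA:=\{w_k:k\in\Z\}$, the coordinate sum strictly increases along any up-right path and $u_n\le 0\le v_n$, so every such path crosses $\cA$ at exactly one vertex. Hence $\pi^{u_n,v_n}$ crosses $\cA$ at $w_{K_n}$ with
\[
K_n\in\argmax{k}S_n(k),\qquad
S_n(k):=G_{u_n,w_k}-G_{u_n,0}+G_{w_k,v_n}-G_{0,v_n}+\Yw_0-\Yw_{w_k},
\]
the argmax taken over $k$ with $u_n\le w_k\le v_n$. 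Since $S_n(0)=0$, the event of interest satisfies $\{0\in\pi^{u_n,v_n}\}=\{K_n=0\}\subseteq\{S_n(k)\le 0 \text{ for all valid } k\ne 0\}$.

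For each fixed $k$, Theorem~\ref{t:buse}(iii) gives $G_{w_k,v_n}-G_{0,v_n}\to \Bus{\uvec}_{w_k,0}$ $\P$-a.s.\ as $v_n\to\infty$ in direction $\uvec$. Applying Theorem~\ref{t:buse} to the reflected i.i.d.\ field $\wt\Yw_x:=\Yw_{-x}$ yields a backward Busemann function $\check B^{\uvec}$ with $G_{u_n,w_k}-G_{u_n,0}\to \check B^{\uvec}_{w_k,0}$ $\P$-a.s., since $u_n\to\infty$ in direction $-\uvec$. Therefore $S_n(k)\to S_\infty(k):=\Bus{\uvec}_{w_k,0}+\check B^{\uvec}_{w_k,0}+\Yw_0-\Yw_{w_k}$ $\P$-a.s., with $S_\infty(0)=0$, jointly over any finite window of $k$'s.

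Theorem~\ref{t:buse}(i) applied to a doubly-infinite down-right path through all the $w_k$'s shows that $\{\Bus{\uvec}_{w_{k+1},w_k}\}_{k\in\Z}$ is i.i.d., and the reflected analogue gives the same for the backward increments. A direct computation via \eqref{EB8} and its backward counterpart verifies that the forward and backward mean contributions cancel, so the increments of $\{S_\infty(k)\}$ have mean zero, and positive variance follows from the i.i.d.\ $\Yw_{w_k}$'s. Consequently $\{S_\infty(k)\}_{k\in\Z}$ is a mean-zero one-dimensional random walk (modulo justifying joint forward/backward independence across $k$), hence recurrent, and $\limsup_{|k|\to\infty}S_\infty(k)=+\infty$ almost surely. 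Thus $\P\bigl(S_\infty(k)\le 0\text{ for all } k\ne 0\bigr)=0$.

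To conclude, for any fixed $M\ge 1$,
\[
\P\{0\in\pi^{u_n,v_n}\}\ \le\ \P\bigl(S_n(k)\le 0,\ 0<|k|\le M\bigr)+\P\bigl([-M,M]\not\subset[u_n^{(1)},v_n^{(1)}]\bigr);
\]
the second term vanishes as $|u_n|_1,|v_n|_1\to\infty$, and the first converges by finite-dimensional joint convergence to $\P(S_\infty(k)\le 0,\ 0<|k|\le M)$, which vanishes as $M\to\infty$. The main technical obstacle is extracting sufficient joint independence between $\Bus{\uvec}$ and $\check B^{\uvec}$ along the common down-right path through $(w_k)_{k\in\Z}$ to justify the random-walk recurrence; the cleanest route is to apply Theorem~\ref{t:buse}(i) simultaneously to the original and reflected systems and to exploit that disjoint sub-paths involve disjoint bundles of independent edge variables in each.
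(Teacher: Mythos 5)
Your overall strategy --- locate the unique crossing of $\pi^{u_n,v_n}$ with the antidiagonal through $z_n$, rewrite $\{z_n\in\pi^{u_n,v_n}\}$ as ``$k=0$ maximizes $S_n(k)$'', and pass to Busemann limits on finite windows --- is exactly the method the paper attributes to Theorem 4.12 of the lecture notes \cite{sepp-cgm-18} (the paper itself only cites that proof and does not reproduce it). Your setup through the a.s.\ convergence $S_n(k)\to S_\infty(k)$ and the mean computation via \eqref{EB8} is sound. The genuine gap is the assertion that $\{S_\infty(k)\}$ is a mean-zero random walk, hence recurrent with $\limsup_{|k|\to\infty}S_\infty(k)=+\infty$; you flag this yourself, and the repair you propose does not work. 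Applying Theorem \ref{t:buse}(i) to the original and to the reflected system gives independence statements \emph{internal} to each system, but both systems are deterministic functions of the same field $\Yw$, and the forward and backward objects are genuinely coupled along the antidiagonal: by \eqref{914}, $\Yw_{w_k}=\Bus{\uvec}_{w_k,w_k+\evec_1}\wedge\Bus{\uvec}_{w_k,w_k+\evec_2}$ is a function of the forward Busemann edge variables on the staircase through the $w_k$, while $\check B^{\uvec}_{w_k,0}$ is built from the weights on and below the antidiagonal and hence depends on those same $\Yw_{w_k}$. Moreover, even granting independence of the two Busemann fields, consecutive increments of $S_\infty$ share the term $\Yw_{w_k}$, so they are stationary but not independent, and for merely stationary mean-zero increments the conclusion $\limsup S_\infty=+\infty$ can fail (increments of the form $Z_k-Z_{k-1}$ with $Z$ stationary and bounded give $S_\infty(k)=Z_k-Z_0$, which need not be unbounded above relative to $Z_0$).

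The good news is that the conclusion $\P\{S_\infty(k)\le 0\ \ \forall k\ne 0\}=0$ needs no independence at all. By additivity and covariance of the cocycles, $S_\infty(k)-S_\infty(j)=\Bus{\uvec}_{w_k,w_j}+\check B^{\uvec}_{w_k,w_j}+\Yw_{w_j}-\Yw_{w_k}$ evaluated at $\w$ equals $S_\infty(k-j)$ evaluated at $\theta_{w_j}\w$, so the events $A_j=\{S_\infty(k)<S_\infty(j)\ \forall k\ne j\}$ are equiprobable; they are pairwise disjoint, so each has probability zero. Since ties $S_\infty(k)=S_\infty(0)$ for some $k\ne0$ occur with probability zero (continuous distributions), $\{S_\infty(k)\le0\ \forall k\ne0\}$ agrees with $A_0$ up to a null set, which closes the argument. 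I recommend replacing the random-walk claim with this stationarity argument (and writing out the no-ties step); with that substitution your proof is correct and matches the spirit of the cited one.
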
 

This theorem  is proved for deterministic  $u_n, v_n$ in lecture notes \cite{sepp-cgm-18}  as Theorem 4.12 on p.~174.  The same argument proves the version above for random $u_n, v_n$ and appears in the arXiv version of \cite{sepp-cgm-18}.   The proof proceeds by expressing the condition $z_n\in \pi^{u_n,v_n}$ in terms of  increments of   $G_{x,y}$ and then taking the Busemann limits \eqref{v:855}.

\section{Busemann geodesics  and proofs of the main theorems}  \label{s:bg} 

\subsection{Busemann geodesics}  

Let   $\{\Bus{\uvec}: \uvec\in\ri\Uset\} $ be the covariant integrable cocycles constructed in Theorem \ref{t:buse}.  
 We   write  interchangeably $\Bus{\uvec}(x,y,\w)=\Bus{\uvec}_{x,y}(\w)$.
For each direction $\uvec\in\ri\Uset$ and initial point $x\in\Z^2$ construct a semi-infinite  random  up-right  lattice path 
$\bgeodu{x}(\w)
=\{\bgeodu{x}_k(\w)\}_{k\in\Z_{\ge0}}$ by following minimal increments of $\Bus{\uvec}$: 
\be\label{bg6} \begin{aligned}  
\bgeodu{x}_0(\w)&=x, \quad \text{and for $k\ge 0$}  \\[4pt] 
\bgeodu{x}_{k+1}(\w)&=\begin{cases}   \bgeodu{x}_{k}(\w) + \evec_1, &\text{if } \ \Bus{\uvec}_{\bgeodu{x}_{k},\,\bgeodu{x}_{k} + \evec_1} (\w)\le  \Bus{\uvec}_{\bgeodu{x}_{k},\,\bgeodu{x}_{k} + \evec_2}(\w) 
\\[5pt]  
 \bgeodu{x}_{k}(\w) + \evec_2, &\text{if } \   \Bus{\uvec}_{\bgeodu{x}_{k},\,\bgeodu{x}_{k} + \evec_2}(\w) <  \Bus{\uvec}_{\bgeodu{x}_{k},\,\bgeodu{x}_{k} + \evec_1}(\w) . 
\end{cases} 
\end{aligned} \ee
The tie-breaking rule in favor of $\evec_1$  is a convention we follow henceforth.   For a given $\uvec$ 
the case of equality on the right-hand side of the two-case formula happens with probability zero because $\Bus{\uvec}_{x,x+\evec_1}$ and $\Bus{\uvec}_{x,x+\evec_2}$ are independent exponential random variables.    Pictorially, to each point $z$  attach an arrow that points from $z$ to $\bgeodu{z}_1$.  The path  $\bgeodu{x}$ is constructed by starting at $x$ and  following the arrows.   By \eqref{914}, 
\be\label{bg8}   \Yw_{\bgeodu{x}_{k}}=\Bus{\uvec}(\bgeodu{x}_{k},\bgeodu{x}_{k+1}) 
\quad\text{for $k\ge 0$.}  \ee
 We shall call $\bgeodu{x}$ the {\it $\Bus{\uvec}$-geodesic} from $x$.  This term is justified by the next lemma.    Since the processes $\Bus{\uvec}$ arise as Busemann functions, we can also call these geodesics {\it Busemann geodesics}.

\begin{lemma}\label{Bgeod-lm}  $ $ 

\begin{enumerate}[{\rm(i)}] \itemsep=4pt
\item $\bgeodu{x}$ is a semi-infinite geodesic for the LPP process \eqref{v:GY7}.  For all $0\le m<n$,  
\be\label{bgeod93} G({\bgeodu{x}_m,\bgeodu{x}_n}) =   \Bus{\uvec}(\bgeodu{x}_m,\bgeodu{x}_n) + \Yw_{\bgeodu{x}_n}. \ee 

\item There exists an event $\Omega_2$ such that $\P(\Omega_2)=1$ and for all $\w\in\Omega_2$ the following properties hold $\forall \uvec, \vvec\in\ri\Uset$.    If $\uvec\prec\vvec$, then $\bgeod{\vvec}{x}$ stays always {\rm(}weakly{\rm)} to the right and below $\bgeodu{x}$.     Furthermore,   geodesic  $\bgeodu{x}$ is $\uvec$-directed: 
 \be\label{bgeod98} \lim_{n\to\infty} \frac{\bgeodu{x}_n}n =\uvec 
\qquad  \forall x\in\Z^2. \ee

\item   For each fixed $\vvec\in\ri\Uset$ there exists an event $\Omega^{(\vvec)}_3$  such that $\P(\Omega^{(\vvec)}_3)=1$ and  the following properties hold for each $\w\in\Omega^{(\vvec)}_3$ and $x\in\Z^2${\rm:}  $\forall k\in\Z_{\ge0}$,   $\bgeodu{x}_k\to \bgeod{\vvec}{x}_k$ as $\uvec\to\vvec$ in $\ri\Uset$, and   furthermore,  $\bgeod{\vvec}{x}$ is the unique semi-infinite $\vvec$-directed geodesic out of $x$.   In particular, the geodesic tree $\cT_\vvec$ defined by \eqref{T1} can be expressed as 
\be\label{T3}     \cT_\vvec=\bigcup_{x\in\Z^2} \bgeod{\vvec}{x}  \ee  
where again geodesics are regarded as collections of edges. 
\end{enumerate} 
\end{lemma}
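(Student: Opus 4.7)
For part (i), the plan is a backward induction on $n = |z-y|_1$ establishing
\[
G(y, z) \leq \Bus{\uvec}(y, z) + \Yw_z \qquad \text{for all } y \leq z \text{ in } \Z^2,
\]
with equality along $\bgeodu{x}$. The inductive step combines the LPP recursion $G(y, z) = \Yw_y + \max_{k} G(y + \evec_k, z)$, additivity of the cocycle $\Bus{\uvec}$, and the identity $\Yw_y = \min_k \Bus{\uvec}(y, y+\evec_k)$ from \eqref{914}: the max of $\Bus{\uvec}(y+\evec_k, z)$ over $k \in \{1,2\}$ equals $\Bus{\uvec}(y, z) - \Yw_y$ by additivity and \eqref{914}. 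Along $\bgeodu{x}$ the selection rule \eqref{bg6} achieves that minimum at every step, so every single-step inequality becomes an equality and \eqref{bgeod93} follows by telescoping, which identifies $\bgeodu{x}$ as a geodesic.

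For part (ii), the plan is to prove monotonicity by induction on $k$. When $\bgeodu{x}_k$ and $\bgeod{\vvec}{x}_k$ meet at a point $z$, chaining \eqref{v:853.1} rules out the forbidden case that $\bgeod{\vvec}{x}$ steps to $\evec_2$ while $\bgeodu{x}$ steps to $\evec_1$; when the two paths are already separated, the up-left/right-below ordering is preserved automatically for any next step. For the directedness \eqref{bgeod98}, pick any subsequence along which $\bgeodu{0}_{n_j}/n_j \to \wvec \in \Uset$ (by compactness). Theorem \ref{t:sh} yields $G(0, \bgeodu{0}_{n_j})/n_j \to \gpp(\wvec)$, Borel--Cantelli applied to the exponential tails of $\Yw$ gives $\Yw_{\bgeodu{0}_{n_j}}/n_j \to 0$, and a uniform shape theorem for the integrable cocycle $\Bus{\uvec}$---derived from the i.i.d.\ Exp$(\alpha)$ and Exp$(1-\alpha)$ increments along axis-parallel down-right paths supplied by Definition \ref{v:d-exp-a}(a)---yields $\Bus{\uvec}(0, \bgeodu{0}_{n_j})/n_j \to \wvec \cdot \nabla \gpp(\uvec)$. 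Part (i) then forces $\gpp(\wvec) = \wvec \cdot \nabla \gpp(\uvec)$; combining with Euler's homogeneity identity $\gpp(\uvec) = \uvec \cdot \nabla \gpp(\uvec)$ and the strict concavity of $\gpp$ restricted to $\Uset$ (visible from \eqref{g}) pins down $\wvec = \uvec$.

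For part (iii), continuity follows from \eqref{v:855.7}: almost surely $\Bus{\vvec}(y, y+\evec_1) \neq \Bus{\vvec}(y, y+\evec_2)$ since they are independent exponentials, so the selection rule \eqref{bg6} at $y$ is stable under small perturbations of $\uvec$, giving $\bgeodu{x}_k \to \bgeod{\vvec}{x}_k$ at every fixed $k$ by induction. For uniqueness, let $\pi$ be any $\vvec$-directed semi-infinite geodesic from $x$. By a.s.\ uniqueness of $G$-maximizers, two distinct semi-infinite geodesics from $x$ that split at some step cannot re-meet (a rejoining would yield two distinct maximizers of $G$), so $\pi$ and each $\bgeodu{x}$ are totally ordered past any split. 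For rational $\uvec' \prec \vvec$, $\bgeod{\uvec'}{x}$ is $\uvec'$-directed by (ii); if $\pi$ stayed permanently strictly up-left of $\bgeod{\uvec'}{x}$, the asymptotic directions would give $\vvec \cdot \evec_2 \geq \uvec' \cdot \evec_2$, contradicting $\uvec' \prec \vvec$. Hence $\pi$ stays weakly right-below $\bgeod{\uvec'}{x}$ and, symmetrically, weakly up-left of $\bgeod{\uvec''}{x}$ for rational $\uvec'' \succ \vvec$. Letting $\uvec' \nearrow \vvec$ and $\uvec'' \searrow \vvec$ along countable sequences and applying the continuity just proved squeezes $\pi_k = \bgeod{\vvec}{x}_k$ at every $k$. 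The representation \eqref{T3} then follows from \eqref{T1} and this uniqueness.

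The hardest step will be the uniform shape theorem for the cocycle $\Bus{\uvec}$, since it must control $\Bus{\uvec}(0, \bgeodu{0}_{n_j})/n_j$ at random endpoints whose asymptotic direction is not known a priori; once this uniform LLN is in place, the rest of the proof is driven by the Busemann monotonicity \eqref{v:853.1}, non-crossing of geodesics, and strict concavity of $\gpp$ on $\Uset$.
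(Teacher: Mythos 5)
Your proposal is correct and follows essentially the same route as the paper: part (i) rests on the pointwise bound $\Yw_y\le \Bus{\uvec}_{y,y+\evec_k}$ from \eqref{914} with equality along the minimal-increment path (your dynamic-programming induction is just a rephrasing of the paper's direct comparison with a competing path), part (ii) on the uniform cocycle ergodic theorem (the paper's Theorem \ref{cc-ergthm}, quoted in the appendix) combined with the shape theorem and strict concavity of $\gpp$, and part (iii) on squeezing a $\vvec$-directed geodesic between $\bgeodu{x}$ and $\bgeod{\wvec}{x}$ for $\uvec\prec\vvec\prec\wvec$ using non-crossing of geodesics emanating from a common point. The only point worth adding is that the single event $\Omega_2$ valid for all directions simultaneously is obtained by proving directedness on a countable dense set of $\uvec$ and extending to all $\uvec$ via the monotone ordering you establish; your write-up proves directedness only for a fixed $\uvec$ and leaves this (routine) extension implicit.
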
 

\begin{proof}  Part (i).  Let $x_{0,n}$ be any path from  $x_0=\bgeodu{x}_0=x$ to $x_n=\bgeodu{x}_n$.  By \eqref{914} and \eqref{bg8}, 
\begin{align*}
\sum_{k=0}^{n}\Yw_{x_k} &\le \sum_{k=0}^{n-1}  \Bus{\uvec}(x_k,x_{k+1}) + \Yw_{x_n} 
= \Bus{\uvec}(x_0,x_n) + \Yw_{x_n} = \Bus{\uvec}(\bgeodu{x}_0,\bgeodu{x}_n) + \Yw_{\bgeodu{x}_n} \\ 
&=\sum_{k=0}^{n-1}  \Bus{\uvec}(\bgeodu{x}_k,\bgeodu{x}_{k+1}) + \Yw_{\bgeodu{x}_n}
=\sum_{k=0}^{n}\Yw_{\bgeodu{x}_k}. 
\end{align*}
Thus for any $n$,  the segment $\bgeodu{x}_{0,n}$ is a geodesic between its endpoints. 

\smallskip

Part (ii).   The ordering of Busemann geodesics follows from the monotonicity \eqref{v:853.1}  of the Busemann functions.

For  the limit \eqref{bgeod98} consider first   fixed $\uvec\in\ri\Uset$.   Recall the mean vector of $\Bus{\uvec}$ from \eqref{EB8}.    The cocycle  ergodic theorem (Theorem \ref{cc-ergthm} in the Appendix) applies to the mean-zero cocycle  \[  F(\w,x,y)=-\Bus{\uvec}(x,y,\w)+\nabla \gpp(\uvec)\cdot (y-x)\]  by virtue of the bound 
$  F(\w,0,\evec_i)  \le -Y_0 + C   $ 
that comes from   \eqref{914}.  
By translation-invariance, if  \eqref{bgeod98} is proved for $x=0$ it follows for all   $x$. Since $\bgeodu{0}\subset\Z_{\ge0}^2$,  $\gpp({\bgeodu{0}_n})$ is defined for the shape function $\gpp$ in \eqref{g}.  
 Then 
by the homogeneity of $\gpp$,   \eqref{bgeod93},  \eqref{sh89},  and Theorem \ref{cc-ergthm}, 
\begin{align*}
&\gpp\biggl(\frac{\bgeodu{0}_n}n \biggr) -  \nabla \gpp(\uvec)\cdot  \frac{\bgeodu{0}_n}n \\
&\quad =\frac1n\bigl[ \gpp({\bgeodu{0}_n}) -  G({0 ,\bgeodu{0}_n}) \bigr]  + \frac1n\bigl[ \Bus{\uvec}(0,\bgeodu{0}_n)  - \nabla \gpp(\uvec)\cdot  {\bgeodu{0}_n} \bigr] + \frac{\Yw_{\bgeodu{0}_n}}n\\[4pt] 
&\quad \longrightarrow 0 \qquad \text{almost surely as $n\to\infty$.} 
\end{align*}

All the limit points of $\bgeodu{0}_n/n$ lie on $\Uset$. 
As a differentiable, concave and homogeneous function, $\gpp$ satisfies $\gpp(\xi)=\nabla\gpp(\xi)\cdot\xi$ for all $\xi\in\R_{>0}^2$.     Since $\gpp$ is strictly concave  on $\Uset$,   for every $\delta>0$ there exists $\e>0$ such that  
\be\label{bg-gpp7} \gpp(\vvec)\le \nabla\gpp(\uvec)\cdot\vvec-\e  \qquad\text{for $\vvec\in\Uset$ such that  $\abs{\vvec-\uvec}\ge\delta$. }\ee
 Thus the limit above forces $\bgeodu{0}_n/n\to\uvec$ almost surely.  

Let  $\Omega_2$ be  the event on which   limit  \eqref{bgeod98} happens for a countable  dense set of directions  $\uvec\in\ri\Uset$ and all $x\in\Z^2$.  The limit extends simultaneously to all $\uvec\in\ri\Uset$ on the event   $\Omega_2$ by virtue of the ordering of the geodesics $\bgeodu{x}$.

\smallskip

Part (iii).   Let  $\Omega^{(\vvec)}_3$ be the event on which limits  \eqref{v:855.7}  hold,  uniqueness of finite geodesics holds,   equality on the right-hand side of \eqref{bg6} does not happen  for the fixed $\vvec$, and part (ii) above holds.   On this event 
 $\bgeodu{x}_k\to \bgeod{\vvec}{x}_k$ as $\uvec\to\vvec$  because, inductively in  $k$, \eqref{bg6} chooses the same step for all $\uvec$ close enough to $\vvec$ by virtue of  \eqref{v:855.7}.   
 
 Let $\pi=(\pi_i)_{i\in\Z_{\ge0}}$ be a $\vvec$-directed semi-infinite geodesic from $\pi_0=x$.   Let $\uvec\prec\vvec\prec\wvec$ in $\ri\Uset$.   By the directedness \eqref{bgeod98},  after some (random but finite) number of steps $\pi$ remains strictly between $\bgeodu{x}$ and $\bgeod{\wvec}{x}$.   Then it follows that $\pi$ remains for all time weakly between $\bgeodu{x}$ and $\bgeod{\wvec}{x}$.  
  For if $\pi$ ever went strictly to the left of  $\bgeodu{x}$, it would have to eventually intersect  $\bgeodu{x}$ at some later point $\pi_m=\bgeodu{x}_m$.    Then there would   be two distinct geodesics $\pi_{0,m}$ and  $\bgeodu{x}_{0,m}$ from $x$ to $\pi_m$, in violation of the uniqueness of finite geodesics.   Similarly $\pi$ cannot go   strictly to the right of  $\bgeod{\wvec}{x}$.  
  
  Letting  $\uvec\to\vvec$ and $\wvec\to\vvec$ shows that $\pi$ must  coincide with  $\bgeod{\vvec}{x}$.
  \end{proof}

\begin{proof}[Proof of Theorem \ref{t:main-1}] 
Part (i).   Let $\Omega_4$ be the full probability event on which finite geodesics are unique and limits \eqref{bgeod98} hold for all $x\in\Z^2$ and all $\uvec\in\ri\Uset$. Fix $\w\in\Omega_4$.    Let $x_\dbullet=(x_n)_{n\ge n_0}$ be a semi-infinite geodesic at this sample point $\w$.  We can assume it indexed so that $x_n\cdot(\evec_1+\evec_2)=n$.    Suppose 
\be\label{bg701}    \underline u_1= \varliminf   \frac{x_n\cdot \evec_1}n  <   \varlimsup_{n\to\infty}    \frac{x_n\cdot \evec_1}n  =  \bar u_1.  \ee
Then necessarily $0\le \underline u_1< \bar u_1 \le1$.  
 Pick a vector $\uvec\in\ri\Uset$ between $\underline\uvec=(\underline u_1, 1-\underline u_1)$ and $\bar\uvec=(\bar u_1, 1-\bar u_1)$. 
Then infinitely often $x_\dbullet$ is strictly to the left of, strictly to the right of, and  crosses $\bgeodu{x_{n_0}}$.  This violates the uniqueness of finite geodesics.  Consequently \eqref{bg701} cannot happen on $\Omega_4$ and hence all semi-infinite geodesics have a direction.

 

\medskip

Part (ii).   We prove the case $\evec_1$ for $x=0$.    Fix a sequence $\wvec_1\prec\wvec_2\prec\dotsm\prec\wvec_k\prec\dotsm$ in $\ri\Uset$ such that  $\wvec_k\to\evec_1$. By Theorem \ref{t:buse}, $\Bus{\wvec_k}_{0, \evec_2}\sim$ Exp$(1-\alpha(\wvec_k))$.  Since 
  $1-\alpha(\wvec_k)\to 0$,  
  \be\label{bg704} \Bus{\wvec_k}_{0, \evec_2}\to\infty\qquad\text{almost surely as $k\to\infty$}
  \ee
   by the monotonicity \eqref{v:853.1}.  
     While retaining $\P(\Omega_4)=1$, modify the event $\Omega_4$ so that \eqref{bg704} holds on $\Omega_4$, and further  intersect it with the (countably many full probability) events $\Omega^{(\wvec_k)}_1$ from Theorem \ref{t:buse}(iii).  Now the Busemann limit \eqref{v:855} holds on $\Omega_4$ for $\vvec=\wvec_k$ for each $k$.   
     
 Fix $\w\in\Omega_4$.  Suppose that at this $\w$ there is a   semi-infinite geodesic $\pi=\{\pi_n\}_{n\in\Z_{\ge0}}$  such that $\pi_0=0$, $\pi_\ell=(\ell-1,1)$ for some $\ell\ge 1$, and $\varliminf_{n\to\infty} n^{-1} \pi_n\cdot \evec_2=0$.  We derive a contradiction from this. 

By connecting $\evec_2=(0,1)$ to the point $\pi_\ell=(\ell-1,1)$ (now fixed for the present)  with a horizontal path, we get the lower bound 
\[  \Gpp_{\evec_2, \pi_n}\ge  \sum_{i=0}^{\ell-1}\w_{(i,1)} + \Gpp_{\pi_{\ell+1}, \pi_n} 
\qquad \text{for } n>\ell. \]
That $\pi$ is a geodesic from $\pi_0=0$ implies  $\Gpp_{0, \pi_n}= \Gpp_{0, \pi_\ell} + \Gpp_{\pi_{\ell+1}, \pi_n} $ for $n>\ell$. 
Thus 
\be\label{406}  \Gpp_{0, \pi_n} -  \Gpp_{\evec_2, \pi_n}\le \Gpp_{0, \pi_\ell}-  \sum_{i=0}^{\ell-1}\w_{(i,1)}
 \qquad \text{for all } n>\ell. \ee
 For each $k$, fix a sequence $\{w_{n,k}\}_{n\ge 0}$ in $\Z^2_{\ge 0}$ such that $\abs{w_{n,k}}_1=n$ and $\lim_{n\to\infty} n^{-1}w_{n,k}=\wvec_k$. 
 By the assumptions $\varliminf n^{-1} \pi_n\cdot \evec_2=0$ and $\wvec_k\in\ri\Uset$, and  by Lemma \ref{lm-til}, there are infinitely many indices $n$ such that 
 \[ \Gpp_{0, \pi_n} -  \Gpp_{\evec_2, \pi_n}\ge \Gpp_{0, w_{n,k}} -  \Gpp_{\evec_2, w_{n,k}}. 
  \]  
Hence by the Busemann limit \eqref{v:855},
 \[  \varlimsup_{n\to\infty} [\Gpp_{0, \pi_n} -  \Gpp_{\evec_2, \pi_n} ] \ge \Bus{\wvec_k}_{0, \evec_2}. 
  \]  
Limit \eqref{bg704} now contradicts \eqref{406} because the right-hand side of \eqref{406} is fixed and finite. 

\medskip

Part (iii).  The family $\{\bgeodu{x}: \uvec\in\ri\Uset, x\in\Z^2\}$ gives a $\uvec$-directed semi-infinite geodesic for each $\uvec\in\ri\Uset$ and each starting point $x$.   A semi-infinite  geodesic in direction $\evec_r$ from $x$ is defined trivially by $x_k=x+k\evec_r$ for $k\ge 0$. 
\end{proof}

  \begin{proof}[Proof of \eqref{a1}] 
By part (iii) of Lemma \ref{Bgeod-lm} and by \eqref{bg6}, 
\[   \P\{ \ageodu{x}_1=x+\evec_1\} =    \P\{ \bgeodu{x}_1=x+\evec_1\} =  \P\{ \Bus{\uvec}_{x,x+\evec_1} \le  \Bus{\uvec}_{x,x+\evec_2} \} = \alpha. 
\]
The last equality is due to the fact that $\Bus{\uvec}_{x,x+\evec_1}$  and $\Bus{\uvec}_{x,x+\evec_2}$ are independent exponential random variables with rates $\alpha$ and $1-\alpha$, respectively.   This comes from part (i) of Theorem \ref{t:buse} because $(x+\evec_2, x,x+\evec_1)$ is a segment of a down-right path. 
\end{proof}

  \begin{remark}\label{r:pm2}   If two separate Busemann processes   $\Bus{\uvec}_+$   and $\Bus{\uvec}_-$    are constructed as indicated in Remark \ref{r:pm},  then two Busemann geodesics    $\bgeodu{x,+}$  and $\bgeodu{x,-}$ would be constructed by \eqref{bg6}.    Lemma \ref{Bgeod-lm} would hold for both families.  Furthermore,   $\bgeodu{x,+}$ would always stay weakly to the right and below $\bgeodu{x,-}$. 
\hbox{ }  \hfill$\triangle$\end{remark}

In view of Lemma \ref{Bgeod-lm}(iii),  to complete the proof of Theorem \ref{t:main1}, it suffices to prove that, $\P$-almost surely  for a fixed $\uvec\in\ri\Uset$,  geodesics $\bgeodu{x}$ and $\bgeodu{y}$ coalesce and that there is no bi-infinite $\Bus{\uvec}$-geodesic.   To achieve  this we introduce dual geodesics and along the way  prove   Theorem \ref{t:main2}.  


\subsection{South-west and dual geodesics} 

Define {\it south-west $\Bus{\uvec}$-geodesics}   
$\swbgeodu{x}(\w)$ by following minimal south-west  increments of $\Bus{\uvec}$: 
\be\label{bg16} \begin{aligned}  
\swbgeodu{x}_0(\w)&=x, \quad \text{and for $k\ge 0$}  \\[4pt] 
\swbgeodu{x}_{k+1}(\w)&=\begin{cases}   \swbgeodu{x}_{k}(\w) - \evec_1, &\text{if } \ \Bus{\uvec}_{\swbgeodu{x}_{k}-\evec_1,\,\swbgeodu{x}_{k}}( \w)\le   \Bus{\uvec}_{\swbgeodu{x}_{k}-\evec_2,\,\swbgeodu{x}_{k}}(\w) 
\\[6pt]  
 \swbgeodu{x}_{k}(\w) - \evec_2, &\text{if } \   \Bus{\uvec}_{\swbgeodu{x}_{k}-\evec_2,\,\swbgeodu{x}_{k}}(\w) <   \Bus{\uvec}_{\swbgeodu{x}_{k}-\evec_1,\,\swbgeodu{x}_{k}}(\w) . 
\end{cases} 
\end{aligned} \ee
 By \eqref{915}, 
\be\label{bg18}   \Xw^\uvec_{\swbgeodu{x}_{k}}=\Bus{\uvec}(\swbgeodu{x}_{k+1},\swbgeodu{x}_{k}) 
\quad\text{for $k\ge 0$.}  \ee
Define an LPP  process in terms of the weights $\Xw^{\uvec}$:  
	\be\label{v:GX7}  
	  \Gpp^{\Xw^{\uvec}}_{x,y}=\Gpp^{\Xw^{\uvec}}(x,y)=\max_{x_{\brbullet}\,\in\,\Pi_{x,y}}\sum_{k=0}^{\abs{y-x}_1}\Xw^{\uvec}_{x_k}  \qquad \text{for } x\le y\text{ on } \Z^2.   
	\ee
We think of this   LPP process as  pointing  down and left,  but do not  alter the ordering $x\le y$ in the notation $\Gpp^{\Xw^{\uvec}}_{x,y}$.  

\begin{lemma}\label{Bgeod-lmX}   Fix $\uvec\in\ri\Uset$.   

\begin{enumerate}[{\rm(i)}] \itemsep=4pt
\item
$\swbgeodu{x}$ is a semi-infinite down-left geodesic for  LPP process  $\Gpp^{\Xw^{\uvec}}$  defined by \eqref{v:GX7}. 
For all $0\le m<n$,  
\be\label{bgeod93X} G^{\Xw^{\uvec}}(\swbgeodu{x}_n,\swbgeodu{x}_m) =   \Bus{\uvec}(\swbgeodu{x}_n,\swbgeodu{x}_m) + \Xw^{\uvec}_{\swbgeodu{x}_n}. \ee 

\item  We have the $\P$-almost sure direction 
\be\label{bsw98} \lim_{n\to\infty} \frac{\swbgeodu{x}_n}n =-\uvec 
\qquad  \forall x\in\Z^2. \ee

\item  $\Bus{\uvec}$ is the Busemann function for  LPP process  $\Gpp^{\Xw^{\uvec}}$ in direction $-\uvec$. Precisely,   on the event $\Omega^{(\uvec)}_1$  of Theorem \ref{t:buse}{\rm(iii)} and for any  sequence $v_n\in\Z^2$ such that $\abs{v_n}_1\to\infty$ and 
${v_n}/{\abs{v_n}_1} \to -\uvec$,  
\be\label{Bsw}   
 \Bus{\uvec}_{x,y}  = \lim_{n\to\infty}  [\,\Gpp^{\Xw^{\uvec}}_{v_n ,y} - \Gpp^{\Xw^{\uvec}}_{v_n,x} \,] \qquad\forall x,y\in\Z^2. 
\ee

\end{enumerate}  \end{lemma}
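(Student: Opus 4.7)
My plan is to mirror the proof of Lemma~\ref{Bgeod-lm} throughout, with the up-right northeast picture replaced by the down-left southwest one. The inputs are the symmetric counterparts of those used in the previous lemma: the min identity $\Xw^{\uvec}_x = \Bus{\uvec}_{x-\evec_1,x}\wedge \Bus{\uvec}_{x-\evec_2,x}$ from \eqref{915}, the maximizing identity \eqref{bg18} along $\swbgeodu{x}$, the additive cocycle and covariance of $\Bus{\uvec}$, and the fact from Theorem~\ref{t:buse}(i) that $\{\Xw^{\uvec}_x\}_{x\in\Z^2}$ is an i.i.d.\ Exp$(1)$ field on $\OSPTh$.

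For part (i) I would copy the one-line telescoping argument of Lemma~\ref{Bgeod-lm}(i). For any down-left path $(x_k)_{k=0}^n$ from $x_0 = x$ to $x_n = \swbgeodu{x}_n$, the identity \eqref{915} evaluated at $x_k$ and the step actually taken gives $\Xw^{\uvec}_{x_k} \le \Bus{\uvec}(x_{k+1},x_k)$ for $0\le k < n$. Telescoping the additive cocycle and tacking on $\Xw^{\uvec}_{x_n}$ yields
\[
\sum_{k=0}^n \Xw^{\uvec}_{x_k} \;\le\; \Bus{\uvec}(\swbgeodu{x}_n, x) + \Xw^{\uvec}_{\swbgeodu{x}_n},
\]
with equality along $\swbgeodu{x}$ by \eqref{bg18}. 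This is \eqref{bgeod93X} and simultaneously shows that each segment $\swbgeodu{x}_{0,n}$ is a geodesic for $G^{\Xw^{\uvec}}$.

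For part (ii) I would combine three ingredients exactly as in Lemma~\ref{Bgeod-lm}(ii). First, the shape theorem applied to the i.i.d.\ Exp$(1)$ field $\Xw^{\uvec}$ (which gives the same shape function $\gpp$) shows $G^{\Xw^{\uvec}}(\swbgeodu{0}_n, 0) = \gpp(-\swbgeodu{0}_n) + o(n)$ a.s. Second, the cocycle ergodic theorem (Theorem~\ref{cc-ergthm}) applied to the mean-zero cocycle $\Bus{\uvec} - \nabla\gpp(\uvec)\cdot(\cdot-\cdot)$, whose one-sided integrable bound comes from \eqref{915}, yields $\Bus{\uvec}(0,\swbgeodu{0}_n) = \nabla\gpp(\uvec)\cdot\swbgeodu{0}_n + o(n)$. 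Third, identity \eqref{bgeod93X} combines these into
\[
\gpp(-\swbgeodu{0}_n/n) \;=\; \nabla\gpp(\uvec)\cdot(-\swbgeodu{0}_n/n) + o(1),
\]
so that any subsequential limit $\vvec\in\Uset$ of $-\swbgeodu{0}_n/n$ satisfies $\gpp(\vvec) = \nabla\gpp(\uvec)\cdot\vvec$. Strict concavity of $\gpp$ on $\Uset$, exactly as in \eqref{bg-gpp7}, pins down $\vvec = \uvec$. Translation covariance extends the conclusion to all $x\in\Z^2$, and the monotone dependence of the SW steps on $\uvec$ inherited from \eqref{v:853.1} upgrades a countable dense set of directions to all $\uvec\in\ri\Uset$ on a single event of full probability.

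Part (iii) is the main obstacle. My plan is to import the sandwich-with-stationary-boundary argument sketched between Theorems~\ref{t:unique} and~\ref{th:midpt}, now applied to the reflected LPP $G^{\Xw^{\uvec}}$. Definition~\ref{v:d-exp-a} and Theorem~\ref{t:buse}(i) are symmetric in the roles of NE and SW corners, so the $\Bus{\uvec}$-increments play the part of stationary boundary weights on the NE corner of any rectangle with respect to the bulk weights $\Xw^{\uvec}$. For a sequence $v_n$ with $v_n/|v_n|_1\to-\uvec$, I would sandwich the increment $G^{\Xw^{\uvec}}_{v_n,y} - G^{\Xw^{\uvec}}_{v_n,x}$ between two augmented stationary systems with parameters $\lambda\prec\alpha(\uvec)\prec\rho$ via planar monotonicity (Lemma~\ref{lm-til}), send $v_n\to\infty$ to reduce the bounds to $\Bus{\lambda}_{x,y}$ and $\Bus{\rho}_{x,y}$, and finally let $\lambda,\rho\to\alpha(\uvec)$, closing the sandwich by the cadlag property \eqref{v:855.7}. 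The delicate step is verifying that the stationary system on the NE corner of the reflected process really produces the increments $\Bus{\uvec}_{x,y}$ and not some other cocycle; this is exactly what the uniqueness Theorem~\ref{t:unique}, applied to the reflected i.i.d.\ Exp$(1)$ field $\Xw^{\uvec}$ after matching the joint distribution of bulk and boundary, is designed to supply.
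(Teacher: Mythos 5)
Your proposal is correct, but for parts (ii) and (iii) it takes a more laborious route than the paper. Part (i) is exactly the paper's argument (telescoping \eqref{915} and \eqref{bg18}). For the rest, the paper's proof rests on a single reflection trick: set $\wt X_x=\Yw_{-x}$, $\wt B^\uvec_{x,y}=\Bus{\uvec}_{-y,-x}$, $\wt Y^\uvec_x=\Xw^{\uvec}_{-x}$, check that this reflected quadruple is again an exponential-$\alpha(\uvec)$ LPP system (Definition \ref{v:d-exp-a} is symmetric under reflection through the origin with the roles of $\zeta$ and $\eta$ swapped), and invoke Theorem \ref{t:unique} to conclude that $\wt B^\uvec$ \emph{is} the Busemann function of the up-right LPP process with weights $\wt Y^\uvec$. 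After that, part (ii) is just \eqref{bgeod98} applied to $\Gpp^{\wt Y^\uvec}$ (since $-\swbgeodu{x}$ is the $\wt B^\uvec$-geodesic from $-x$), and part (iii) is just the already-proved Busemann limit \eqref{v:855} applied to $\Gpp^{\wt Y^\uvec}$, reflected back. You instead reprove part (ii) from scratch (shape theorem for the i.i.d.\ field $\Xw^\uvec$, cocycle ergodic theorem, strict concavity); this works and is self-contained, but duplicates Lemma \ref{Bgeod-lm}(ii).

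In part (iii) your plan is partly redundant and you should be aware of one point. Rerunning the stationary-boundary sandwich for $\Gpp^{\Xw^\uvec}$ would only establish that the increments $\Gpp^{\Xw^{\uvec}}_{v_n ,y} - \Gpp^{\Xw^{\uvec}}_{v_n,x}$ converge to \emph{some} Busemann cocycle of the reflected process; identifying that limit with $\Bus{\uvec}_{x,y}$ (rather than an a priori different cocycle) is the whole content, and it is supplied precisely by Theorem \ref{t:unique} applied to the reflected system --- which your last sentence correctly identifies. But once you have verified the hypothesis of Theorem \ref{t:unique} for the reflected quadruple (note this is a pathwise verification of both conditions (a) and (b) of Definition \ref{v:d-exp-a} on the given probability space, not merely a matching of distributions), the sandwich is unnecessary: the Busemann limit of Theorem \ref{t:buse}(iii) for the weights $\wt Y^\uvec$ already gives \eqref{Bsw} directly. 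So the one genuinely essential step you should spell out is the verification that the reflected triple satisfies equations \eqref{IJw5.1.7}--\eqref{IJw5.3.7} and the independence structure \eqref{v:Y4}; everything else then comes for free.
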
 

\begin{proof}   Part (i) is proved as in Lemma \ref{Bgeod-lm}, by utilizing \eqref{915} and \eqref{bg18}. 

Define  a process $(\wt X, \wt B^\uvec, \wt Y^\uvec)$  by setting  
\be\label{Ytil4}    \text{$\wt X_x=\Yw_{-x}$,  
$\wt B^\uvec_{x,y}=\Bus{\uvec}_{-y,-x}$,  and 
$\wt Y^\uvec_x=\Xw^{\uvec}_{-x}$  $\ \ \forall x,y\in\Z^2$.   }\ee  
Properties of  $(\Xw^\uvec, \Bus{\uvec}, \Yw)$  given  in Theorem \ref{t:buse} imply that  $\{\wt X_x, \wt B^\uvec_{x-\evec_1,x}, \wt B^\uvec_{x-\evec_2,x}, \wt Y^\uvec_x\}_{x\in\Z^2}$ is an exponential-$\alpha(\uvec)$ LPP system.   By Theorem \ref{t:unique},  $\wt B^\uvec$ is the Busemann function in direction $\uvec$ of the LPP process  $ \Gpp^{\wt Y^{\uvec}}$ defined by  \eqref{v:GY7} but with weights $\wt Y^\uvec$.   

For part (ii), apply definition \eqref{bg6} to $\wt B^\uvec$ and compare the outcome with \eqref{bg16} to conclude that $-\swbgeodu{x}$ is the $\wt B^\uvec$ geodesic that starts at $-x$.   Limit \eqref{bgeod98} applied to the LPP process $ \Gpp^{\wt Y^{\uvec}}$ gives \eqref{bsw98}.  

Part (iii) follows from 
\[ 
 \lim_{n\to\infty}  [\,\Gpp^{\Xw^{\uvec}}_{v_n ,y} - \Gpp^{\Xw^{\uvec}}_{v_n,x} \,] 
 =   \lim_{n\to\infty}  [\,\Gpp^{\wt Y^{\uvec}}_{-y, -v_n } - \Gpp^{\wt Y^{\uvec}}_{-x, -v_n} \,]
 =  \wt B^\uvec_{-y, -x}= \Bus{\uvec}_{x,y}. 
\qedhere \] 
\end{proof} 

\medskip 

 Define {\it dual $\Bus{\uvec}$-geodesics}  $ \dbgeodu{z}$ on the dual lattice $\Z^{2*}$ by shifting south-west geodesics by $\duale=(\tfrac12,\tfrac12)$: 
\be\label{bg22}   \dbgeodu{z}_{k} =  \swbgeodu{z+\duale}_{k}-\duale \qquad\text{  for $z\in\Z^{2*}$ and $k\ge0$}. \ee

\begin{lemma}\label{dgeod-lm1}   Fix $\uvec\in\ri\Uset$.    Then an edge $e$ lies on some geodesic $\bgeodu{x}$ if and only if its dual edge $e^*$ does not lie on any dual geodesic $ \dbgeodu{z}$.    
In particular,  the family $\{\bgeodu{x}: x\in\Z^2\}$ of $\Bus{\uvec}$-geodesics and the family $\{\dbgeodu{z}: z\in\Z^{2*}\}$ of dual $\Bus{\uvec}$-geodesics never cross each other.  
\end{lemma}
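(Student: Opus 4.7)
The plan is to reduce the whole statement to a local analysis inside each $2\times 2$ primal square. Fix any primal edge $e$ and let $y$ be the southwest corner of the unique $2\times 2$ square containing $e$. Cocycle additivity of $\Bus{\uvec}$ along the two monotone up-right paths from $y$ to $y+\evec_1+\evec_2$ gives
\[
\Bus{\uvec}_{y,y+\evec_1}-\Bus{\uvec}_{y,y+\evec_2}=\Bus{\uvec}_{y+\evec_2,y+\evec_1+\evec_2}-\Bus{\uvec}_{y+\evec_1,y+\evec_1+\evec_2},
\]
and I will denote this common quantity by $\Delta$. The left-hand side is precisely the comparison that drives the up-right construction \eqref{bg6} at $y$, while the right-hand side is the comparison that drives the south-west construction \eqref{bg16} at $y+\evec_1+\evec_2$; thus both trees make their local choice in this square according to the sign of the same number $\Delta$.

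Next I translate the condition that $e^*$ lies on some $\dbgeodu{z}$ into a statement about the SW tree. By the definition \eqref{bg22}, $e^*$ lies on some dual geodesic if and only if the SW tree at $y+\evec_1+\evec_2$ takes the particular step whose primal image is dual to $e$. Unpacking the formula $e^*=\{x-\duale,x-\duale+\evec_{3-k}\}$ pairs a horizontal primal $e=\{y,y+\evec_1\}$ with a south step ($-\evec_2$) of the SW tree at $y+\evec_1+\evec_2$, and a vertical primal $e=\{y,y+\evec_2\}$ with a west step ($-\evec_1$) there. Case by case this yields: for horizontal $e$, $e\in\cT_\uvec$ iff $\Delta\le 0$ while $e^*$ lies on some dual geodesic iff $\Delta>0$; for vertical $e$, $e\in\cT_\uvec$ iff $\Delta>0$ while $e^*$ lies on some dual geodesic iff $\Delta\le 0$. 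In both cases the two events are strict complements, which establishes the first assertion.

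For the second assertion, the geometric observation is that an up-right primal lattice path and a down-left dual lattice path can cross only at the common midpoint of some dual pair $(e,e^*)$. Such a crossing would force $e\in\cT_\uvec$ and $e^*$ on a dual geodesic simultaneously, directly contradicting the equivalence just proved.

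The one point requiring care is the bookkeeping of tie-breaks at $\Delta=0$: the conventions in \eqref{bg6} and \eqref{bg16} both break ties in favor of horizontal steps, and I need to verify that these dovetail through the cocycle identity so that the dichotomy $\Delta\le 0$ versus $\Delta>0$ partitions edges cleanly into ``on $\cT_\uvec$'' and ``with dual on some $\dbgeodu{z}$'' with no gap or overlap on either side. This is a deterministic combinatorial check rather than a probabilistic one and cannot be sidestepped by the fact that $\P(\Delta=0)=0$.
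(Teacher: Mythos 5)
Your proof is correct and takes essentially the same route as the paper's: the paper's argument is exactly the chain of equivalences linking the up-right arrow at $x$ to the south-west arrow at $x+\evec_1+\evec_2$ via cocycle additivity around the unit cell, i.e.\ your quantity $\Delta$, and the tie-break bookkeeping you flag does dovetail (both conventions favor the horizontal step, and these are consistent through the additivity identity). The only imprecision is that an edge bounds \emph{two} unit cells rather than a unique one, but your case analysis correctly selects the cell whose center is the endpoint of $e^*$ from which the dual geodesic departs, so nothing is affected.
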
 

\begin{proof}    We need to check that, for $x\in\Z^2$,  $\bgeodu{x}_1=x+\evec_1$ if and only if $\dbgeodu{x+\duale}_1=x+\duale-\evec_1$. 
\begin{align*}
\dbgeodu{x+\duale}_1=x+\duale-\evec_1  \ &\Longleftrightarrow \   \swbgeodu{x+\evec_1+\evec_2}_1=x+\evec_2  \\
&\Longleftrightarrow \   \Bus{\uvec}_{x+\evec_2,\,x+\evec_1+\evec_2} \le  \Bus{\uvec}_{x+\evec_1,\,x+\evec_1+\evec_2}  \\
&\Longleftrightarrow \   \Bus{\uvec}_{x,\,x+\evec_1} \le  \Bus{\uvec}_{x,\,x+\evec_2}  \\
&\Longleftrightarrow \   \bgeodu{x}_1=x+\evec_1. 
\end{align*}
The third equivalence used additivity. 
A similar argument shows that  $\bgeodu{x}_1=x+\evec_2$ if and only if $\dbgeodu{x+\duale}_1=x+\duale-\evec_2$. 
\end{proof}  

\begin{lemma}\label{dgeod-lm2}   Fix $\uvec\in\ri\Uset$.  The process of arrows  $\{\bgeodu{x}_1-x\}_{x\in\Z^2}$  is equal in distribution to the process $\{-x-\duale-\dbgeodu{-x-\duale}_1\}_{x\in\Z^2}$ of reversed dual arrows reflected across the origin.  

\end{lemma}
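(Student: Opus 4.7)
The plan is to identify the reversed dual arrow process with the arrow process of the Busemann geodesics of the dual LPP system built in \eqref{Ytil4}, and then transfer the distribution back to the original system via Theorem \ref{t:unique}.

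First, I would unwind the dual-to-south-west correspondence arithmetically. For each $x\in\Z^2$, setting $z=-x-\duale\in\Z^{2*}$ and applying \eqref{bg22} gives
\[
-x-\duale-\dbgeodu{-x-\duale}_1 \;=\; -x-\duale-\bigl(\swbgeodu{-x}_1-\duale\bigr)\;=\; -x-\swbgeodu{-x}_1 \;\in\;\{\evec_1,\evec_2\}.
\]
Let $\wt\bgeod{\uvec}{y}$ denote the Busemann geodesic obtained by applying \eqref{bg6} to the cocycle $\wt B^\uvec$ introduced in \eqref{Ytil4}. The proof of Lemma~\ref{Bgeod-lmX} establishes the pathwise identity $\wt\bgeod{\uvec}{y}=-\swbgeodu{-y}$, so with $y=x$ and looking at first steps,
\[
-x-\duale-\dbgeodu{-x-\duale}_1 \;=\; \wt\bgeod{\uvec}{x}_1 - x.
\]
Thus, on a set of full probability, the reversed-and-reflected dual arrow process coincides pointwise with the arrow process $\{\wt\bgeod{\uvec}{x}_1-x\}_{x\in\Z^2}$ of the Busemann geodesics associated to $\wt B^\uvec$.

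Second, I would argue distributional equality of the two arrow processes. The arrow process is a deterministic pointwise function of the cocycle values $\{B^\uvec_{x,x+\evec_i}\}_{x\in\Z^2, i\in\{1,2\}}$ via the rule \eqref{bg6}; the same functional applied to $\wt B^\uvec$ yields $\{\wt\bgeod{\uvec}{x}_1-x\}$. By Lemma~\ref{Bgeod-lmX}, $\{\wt X_x,\wt B^\uvec_{x-\evec_1,x},\wt B^\uvec_{x-\evec_2,x},\wt Y^\uvec_x\}$ is an exponential-$\alpha(\uvec)$ LPP system whose bulk weights $\wt Y^\uvec$ are i.i.d. Exp(1), and the uniqueness Theorem~\ref{t:unique} characterizes the cocycle $\wt B^\uvec$ as a measurable functional of $\wt Y^\uvec$ in exactly the same way that $B^\uvec$ is a measurable functional of $Y$. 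Since $\wt Y^\uvec\stackrel{d}{=}Y$, the joint laws $(\wt Y^\uvec,\wt B^\uvec)\stackrel{d}{=}(Y,B^\uvec)$, and therefore
\[
\{\wt\bgeod{\uvec}{x}_1-x\}_{x\in\Z^2}\;\stackrel{d}{=}\;\{\bgeodu{x}_1-x\}_{x\in\Z^2}.
\]
Combining this with the pathwise identity of the previous step yields the claim.

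The main obstacle lies in Step 2, specifically in justifying that the Busemann functional is intrinsic enough that it can be transferred between $Y$ and $\wt Y^\uvec$: one has to invoke Theorem~\ref{t:unique} applied to the dual i.i.d. field $\wt Y^\uvec$ to conclude that $\wt B^\uvec$ really is obtained from $\wt Y^\uvec$ by the same rule that produces $B^\uvec$ from $Y$. The pathwise manipulations in Step 1 are routine symbolic bookkeeping of the shift-and-reflect operations connecting primal, south-west, and dual geodesics.
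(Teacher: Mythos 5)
Your proposal is correct and follows essentially the same route as the paper: reduce the reflected dual arrow at $x$ to $-x-\swbgeodu{-x}_1$, recognize this as the first step of the $\wt B^\uvec$-geodesic from $x$ (equivalently, the same inequality rule \eqref{bg6} applied to $\wt B^\uvec$), and conclude from $\wt B^\uvec\deq\Bus{\uvec}$, which rests on Theorem \ref{t:unique} and $\wt Y^\uvec\deq Y$ exactly as you argue. The paper simply writes out the chain of equivalences \eqref{bg26} where you invoke the pathwise identity $-\swbgeodu{-y}=$ ($\wt B^\uvec$-geodesic from $y$) already recorded in the proof of Lemma \ref{Bgeod-lmX}; these are the same computation.
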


\begin{proof}  Utilize again the process defined in \eqref{Ytil4}.  As observed,  $\wt B^\uvec$ is the Busemann function in direction $\uvec$ of the LPP process \eqref{v:GY7} with weights $\wt Y^\uvec$. 
 In particular then   processes  $\wt B^\uvec$ and $\Bus{\uvec}$ are equal in distribution.   Distributional equality  $\{-x-\swbgeodu{-x}_1\}_{x\in\Z^2}\deq\{\bgeodu{x}_1-x\}_{x\in\Z^2} $ follows from these equivalences:
\be\label{bg26} \begin{aligned}
-x-\swbgeodu{-x}_1=\evec_1\ &\Longleftrightarrow \   \swbgeodu{-x}_1=-x-\evec_1 \ \Longleftrightarrow \  \Bus{\uvec}_{-x-\evec_1,-x}\le \Bus{\uvec}_{-x-\evec_2,-x} \\
&\Longleftrightarrow \   \wt B^\uvec_{x,x+\evec_1}\le \wt B^\uvec_{x,x+\evec_2} 
\end{aligned}\ee 
and 
\begin{align*}
\bgeodu{x}_1-x =\evec_1\ &\Longleftrightarrow \  \Bus{\uvec}_{x,x+\evec_1}\le \Bus{\uvec}_{x,x+\evec_2}.
\end{align*}
The claim of the lemma follows from   $-x-\duale-\dbgeodu{-x-\duale}_1=-x-\swbgeodu{-x}_1$. 
%
%
%
%
\end{proof} 

The message of the last two lemmas is that  the up-right directed $\Bus{\uvec}$-geodesics $\{\bgeodu{x}: x\in\Z^2\}$  and the down-left directed dual $\Bus{\uvec}$-geodesics  $\{\dbgeodu{z}: z\in\Z^{2*}\}$  never cross each other but are equal in distribution, modulo a shift by $\duale$ and a  lattice reflection across the origin.

\subsection{Coalescence and the bi-infinite geodesic} 

The {\it backward $\Bus{\uvec}$-cluster}  $\cC^\uvec(x)$  at $x$  consists of those points $y$ whose $\Bus{\uvec}$-geodesic goes through $x$: 
\[   \cC^\uvec(x)=\{ y\in x+\Z^2_{\le0}:  \bgeodu{y}_{\abs{x-y}_1}=x\}  . \] 
A bi-infinite up-right nearest-neighbor  path $\{x_k\}_{k\in\Z}$ on $\Z^2$  is a {\it bi-infinite $\Bus{\uvec}$-geodesic} if $\bgeodu{x_k}_{\ell-k}=x_\ell$ for all indices $k<\ell$ in $\Z$. 
If   two $\Bus{\uvec}$-geodesics $\bgeodu{x}$ and $\bgeodu{y}$ have a point in common they {\it coalesce}: namely,  if $\bgeodu{x}_m=\bgeodu{y}_n$ then  $\bgeodu{x}_{m+k}=\bgeodu{y}_{n+k}$ for all $k\ge 0$. 

Consider the following three events. 
\be\label{bgeod100} \begin{aligned} 
{\rm(i)} \quad  &\{\text{there exists a bi-infinite $\Bus{\uvec}$-geodesic}\} \\
{\rm(ii)} \quad &\{\text{$\exists x\in\Z^2$ such that   $\cC^\uvec(x)$ is infinite}\} \\
{\rm(iii)} \quad &\{\text{$\exists x,y\in\Z^2$ such that the  $\Bus{\uvec}$-geodesics $\bgeodu{x}$ and $\bgeodu{y}$  are disjoint}\} .
\end{aligned} \ee
The goal is to show that almost surely  none of these happen.  The first step is to show that they happen together, modulo the duality.

\begin{lemma}\label{bgeod-lm8}  Fix $\uvec\in\ri\Uset$.  Then   all three events in \eqref{bgeod100} have equal  probability. 

\end{lemma}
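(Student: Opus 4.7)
The plan is to prove (i) $\Leftrightarrow$ (ii) pointwise almost surely by a K\"onig's lemma argument, and then to establish $\P((\text{i})) = \P((\text{iii}))$ via the primal-dual distributional symmetry from Lemma \ref{dgeod-lm2} combined with a planar topology argument. No ergodicity is needed, although each of the three events is in any case translation invariant.

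For (i) $\Rightarrow$ (ii): a bi-infinite $\Bus{\uvec}$-geodesic $(x_k)_{k \in \Z}$ satisfies $x_{-k} \in \cC^\uvec(x_0)$ for every $k \ge 0$, so $\cC^\uvec(x_0)$ is infinite. For (ii) $\Rightarrow$ (i): the primal tree arrows restricted to $\cC^\uvec(x)$ form a locally finite (in-degree at most two) infinite rooted tree, so by K\"onig's lemma there is an infinite backward ray $x = y_0, y_{-1}, y_{-2}, \dots$ with $\bgeodu{y_{-k-1}}_1 = y_{-k}$. Splicing this ray with $\bgeodu{x}$ yields a bi-infinite $\Bus{\uvec}$-geodesic.

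For $\P((\text{i})) = \P((\text{iii}))$, introduce the auxiliary event $\cE^\ast = \{\exists\,\text{bi-infinite dual } \Bus{\uvec}\text{-geodesic}\}$. By Lemma \ref{dgeod-lm2} the primal arrow field $\{\bgeodu{x}_1 - x\}_{x \in \Z^2}$ and its reflected-dual counterpart are equal in distribution, and the existence of a bi-infinite geodesic is a function of the arrow field that is invariant under reflection, so $\P((\text{i})) = \P(\cE^\ast)$. To conclude it suffices to show (iii) $\Leftrightarrow \cE^\ast$ almost surely. The easy direction $\cE^\ast \Rightarrow$ (iii) uses Lemma \ref{dgeod-lm1}: a bi-infinite dual path separates $\Z^2$ into two infinite components, and primal geodesics cannot cross the edges of the dual tree, so primal geodesics starting on opposite sides of this dual path stay separated and never coalesce. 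For (iii) $\Rightarrow \cE^\ast$, given non-coalescing $\bgeodu{x}$ and $\bgeodu{y}$, slide $x$ and $y$ along their respective geodesics to place them on a common anti-diagonal $\cA$ with $\bgeodu{x}$ strictly northwest of $\bgeodu{y}$ thereafter. Let $R$ denote the set of dual lattice sites lying strictly between the two primal geodesics. By Lemma \ref{dgeod-lm1} dual arrows from $R$ stay in $R$, and by the $\uvec$-directedness from Lemma \ref{Bgeod-lm}(ii) the region $R$ extends indefinitely to the northeast. Each dual geodesic starting in $R$ must eventually cross the anti-diagonal $\cA$ through one of only finitely many dual sites between $x$ and $y$; by pigeonhole, infinitely many dual geodesics from far-northeast points of $R$ pass through a common dual site $w$, so the dual backward cluster at $w$ is infinite. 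Applying the K\"onig's lemma step of the previous paragraph in the dual tree produces a bi-infinite dual geodesic, i.e., $\cE^\ast$.

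The main obstacle is the pigeonhole extraction step in (iii) $\Rightarrow \cE^\ast$. Three ingredients must be combined in tandem: the non-crossing of dual geodesics with primal ones (Lemma \ref{dgeod-lm1}), the $\uvec$-directedness which forces the region $R$ to grow into the northeast, and the finite width of $R$ across any southwest anti-diagonal, which produces the bottleneck enabling pigeonhole. The remaining implications and the reduction of (iii) to $\cE^\ast$ via Lemma \ref{dgeod-lm2} follow more mechanically from the structural results already in hand.
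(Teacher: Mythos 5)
Your proposal is correct and follows essentially the same route as the paper: the K\"onig's-lemma extraction of a backward ray is the paper's nested-subsequence compactness argument for (i) $=$ (ii), the bottleneck/pigeonhole on the antidiagonal between the two disjoint geodesics is the paper's Step 2, and the separation of the plane by a bi-infinite dual geodesic is the paper's Step 3. The only difference is bookkeeping — you isolate the single application of the primal--dual distributional identity to the event $\cE^*$ and prove (iii) $\Leftrightarrow \cE^*$ pointwise, whereas the paper closes a cycle of inequalities invoking the duality twice — but the ingredients and key steps are identical.
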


\begin{proof}   {\bf Step 1.} $\{{\rm(i)}\}=\{{\rm(ii)}\}$. 
 For one direction, any point on a bi-infinite $\Bus{\uvec}$-geodesic has an infinite backward $\Bus{\uvec}$-cluster.  Conversely, suppose $\cC^\uvec(x_0)$ is infinite.  Then for each $m\in\Z_{>0}$ there exists $y(m)\in x_0+\Z_{\le0}^2$ such that $\bgeodu{y(m)}_{m}=x_0$.   From the finite paths $\bgeodu{y(m)}_{0,m}$,  a compactness argument  produces an infinite  backward path  $\{x_i\}_{i\le 0}$ such that $\bgeodu{x_i}_1=x_{i+1}$ for all $i<0$.   Extend  this infinite backward path to a bi-infinite $\Bus{\uvec}$-geodesic $x_\dbullet$ by defining $x_i=\bgeodu{x_0}_i$ for $i>0$. 
 
 Here is the compactness argument. 
  Choose nested subsequences of indices 
 $\{{m^1_j}\}_{j\ge 1} \supset \{{m^2_j}\}_{j\ge 1} \supset\dotsm\supset \{{m^k_j}\}_{j\ge 1}\supset\dotsm$ 
 such that, for each $k$,   $m^k_1\ge k$ and the $k$-step path segments $\bgeodu{y(m^k_j)}_{m^k_j-k, m^k_j} $ converge to a path $x_{-k,0}$ as $j\to\infty$.   Convergent subsequences exist because the $k$-step segments  $\{\bgeodu{y(m)}_{m-k,m}\}_{m\ge k}$ lie in the finite set of $k$-step paths that end at $x_0$.  Since the subsequences are nested, the limits are consistent and form a single backward nearest-neighbor  path  $\{x_i\}_{i\le 0}$.  
  Since the convergence happens on a discrete set,  for each $k$ there exists $j(k)<\infty$ such that  $\bgeodu{y(m^k_j)}_{m^k_j-k, m^k_j} =x_{-k,0}$ for $j\ge j(k)$. This implies that $\bgeodu{x_i}_1=x_{i+1}$ for all $i<0$.  

\medskip 

 {\bf Step 2.} $\P\{{\rm(iii)}\}\le \P\{{\rm(ii)}\}$.   Suppose event (iii) happens and let points $x_0, y_0\in\Z^2$ be such that  geodesics $\bgeodu{x_0}$ and $\bgeodu{y_0}$ are disjoint. By the limit in \eqref{bgeod98},  both coordinates $\bgeodu{x_0}_n\cdot \evec_1$ and  $\bgeodu{x_0}_n\cdot \evec_2$ increase to $\infty$ as $n\to\infty$, and the same for $\bgeodu{y_0}$.    By suitably redefining the initial points we can assume 
 that  $x_0$ and $y_0$ lie on the same antiodiagonal (that is, $x_0\cdot (\evec_1+\evec_2)=y_0\cdot (\evec_1+\evec_2)$) and  $x_0\cdot \evec_2<y_0\cdot \evec_2$, so that $\bgeodu{y_0}$  is   above and to the  left of $\bgeodu{x_0}$. 

 Let us say that a dual point $z\in\Z^{2*}$ lies between the two geodesics if the ray $\{z+t(\evec_2-\evec_1): t\ge 0\}$ hits a point of $\bgeodu{y_0}$ and the ray $\{z+t(\evec_1-\evec_2): t\ge 0\}$ hits a point of $\bgeodu{x_0}$.  
 
 
 For each $z\in\Z^{2*}$ that lies between the two geodesics, at least one of $z+\evec_1$  and $z+\evec_2$ also lies between the two geodesics.  For if $z+\evec_1$ does not lie between the two geodesics, then   edge $\{z,z+\evec_1\}$ must cross an edge of  $\bgeodu{x_0}$, and this edge is    $\{z+(\tfrac12,-\tfrac12), z+(\tfrac12,\tfrac12)\}$.  Similarly, if $z+\evec_2$ does not lie between the two geodesics, the   edge $\{z+(-\tfrac12,\tfrac12), z+(\tfrac12,\tfrac12)\}$   belongs to $\bgeodu{y_0}$.    Thus  if neither $z+\evec_1$  nor  $z+\evec_2$  lies between the two geodesics,  the two geodesics meet at the point $z+(\tfrac12,\tfrac12)$, contrary to the assumption of no coalescence. 
 
 Thus we can choose a semi-infinite path $\{z_m\}_{m\in\Z_{\ge0}}$ on the dual lattice such that $z_0\cdot (\evec_1+\evec_2)=y_0\cdot (\evec_1+\evec_2)$,  $z_{m+1}\in\{ z_m+\evec_1, z_m+\evec_2\}$ for all $m$, and the entire path $z_\bbullet$ lies between the   geodesics $\bgeodu{x_0}$ and $\bgeodu{y_0}$.   Since $\Bus{\uvec}$-geodesics and dual $\Bus{\uvec}$-geodesics never cross,  the (finite) dual geodesics $\dbgeodu{z_m}_{0,m}$ must also lie between the   geodesics $\bgeodu{x_0}$ and $\bgeodu{y_0}$. In particular, the endpoints $\{\dbgeodu{z_m}_{m}\}_{m\in\Z_{\ge0}}$  lie on the bounded antidiagonal segment between $x_0$ and $y_0$.    By compactness  there is a subsequence $z_{m_j}$ such that  the endpoint converges:  $\dbgeodu{z_{m_j}}_{m_j} \to z^*$.    Since this convergence happens on a discrete set, there exists some $j_0$ such that  $\dbgeodu{z_{m_j}}_{m_j}= z^*$ for all $j\ge j_0$. Thereby the (dual)  backward $\Bus{\uvec}$-cluster  $\cC^{*,\uvec}(z^*)$ is infinite. 
 
 We have shown that  event (iii) implies that event (ii) happens for  dual geodesics. By  the distributional equality of the families of $\Bus{\uvec}$-geodesics and dual $\Bus{\uvec}$-geodesics, the conclusion $\P\{{\rm(iii)}\}\le \P\{{\rm(ii)}\}$ follows.  
   
\medskip

  {\bf Step 3.} $\P\{{\rm(i)}\}\le \P\{{\rm(iii)}\}$.      Let $x$ and $y$ be two points on $\Z^2$ on  opposite sides of a bi-infinite dual $\Bus{\uvec}$-geodesic.  Geodesics $\bgeodu{x}$ and $\bgeodu{y}$ cannot cross the dual $\Bus{\uvec}$-geodesic (Lemma \ref{dgeod-lm1}), and hence cannot coalesce. 
   \end{proof} 
   
   \begin{theorem}\label{t:bg105} 
Fix $\uvec\in\ri\Uset$.  Then   all three events in \eqref{bgeod100} have zero probability. 
\end{theorem}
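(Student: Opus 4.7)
By Lemma~\ref{bgeod-lm8} it suffices to prove that event (i)---the existence of a bi-infinite $\Bus{\uvec}$-geodesic---has probability zero, and I plan to argue this by contradiction using Theorem~\ref{th:midpt}. Suppose $\P(\text{event (i)}) > 0$. Writing event (i) as $\bigcup_{x \in \Z^2} E_x$ with $E_x = \{\w : x \text{ lies on some bi-infinite } \Bus{\uvec}\text{-geodesic}\}$, and noting that the covariance identity $\bgeodu{x}_k(\theta_z\w) = \bgeodu{x+z}_k(\w) - z$ together with the $\Theta$-invariance of $\P$ gives $\P(E_x) = \P(E_0)$ for every $x$, countable subadditivity forces $\P(E_0) > 0$. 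On $E_0$ the backward cluster $\cC^\uvec(0)$ is infinite because it contains the backward half $\{x_{-k}\}_{k\ge0}$ of every bi-infinite geodesic through $0$; hence $\cC^\uvec(0) \cap \{y : \abs{y}_1 = n\}$ is non-empty for each $n \ge 0$.

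The main technical step is to show that on $E_0$, every sequence $u_n \in \cC^\uvec(0)$ with $\abs{u_n}_1 = n$ satisfies $u_n/n \to -\uvec$. For such $u_n$, the path $\bgeodu{u_n}_{0,n}$ is a geodesic from $u_n$ to $0$, so \eqref{bgeod93} gives
\begin{equation*}
\Gpp(u_n, 0) \;=\; \Bus{\uvec}(u_n, 0) + \Yw_0 .
\end{equation*}
Let $-\vvec \in -\Uset$ be any subsequential direction of $u_n/n$. By the shape theorem in the form $\Gpp(u_n, 0)/n \to \gpp(\vvec)$ (derivable from Theorem~\ref{t:sh} and shift-invariance), and by the cocycle ergodic theorem applied to the integrable covariant cocycle $\Bus{\uvec}$---whose mean vector is $\nabla\gpp(\uvec)$ by \eqref{EB8}---one obtains $\Bus{\uvec}(u_n, 0)/n \to \vvec \cdot \nabla\gpp(\uvec)$. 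Together these force $\gpp(\vvec) = \vvec \cdot \nabla\gpp(\uvec)$. Since $\uvec \in \ri\Uset$ and $\gpp$ is strictly concave on $\Uset$ (cf.\ \eqref{bg-gpp7}), this supporting-hyperplane equality pins down $\vvec = \uvec$. Hence every subsequential limit of $u_n/n$ equals $-\uvec$, and $u_n/n \to -\uvec$ follows.

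With the backward direction secured, the argument closes as follows. Measurably select $u_n(\w)$ to be the lexicographically smallest point of $\cC^\uvec(0) \cap \{\abs{y}_1 = n\}$ on $E_0$, and let $u_n$ equal a deterministic lattice point approximating $-n\uvec$ off $E_0$, so that $u_n/n \to -\uvec$ holds $\P$-almost surely globally. Set $v_n = \bgeodu{0}_n$; by Lemma~\ref{Bgeod-lm}(ii), $v_n/n \to \uvec$ almost surely. On $E_0$, $\bgeodu{u_n}$ arrives at $0$ at step $n$ and the arrow rule~\eqref{bg6} then makes it continue along $\bgeodu{0}$, so $\bgeodu{u_n}_{0,2n}$ is a geodesic from $u_n$ to $v_n$ passing through $0$; by uniqueness of finite geodesics this path coincides with $\pi^{u_n,v_n}$, whence $0 \in \pi^{u_n,v_n}$ for every $n$ on $E_0$. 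Consequently $\P\{0 \in \pi^{u_n,v_n}\} \ge \P(E_0) > 0$ for all $n$, contradicting Theorem~\ref{th:midpt} applied with $z_n \equiv 0$ deterministic and the two directional limits just verified. The hardest part is the backward-direction step: it requires the shape theorem and the cocycle ergodic theorem to apply to the \emph{random} sequence $u_n$, handled by standard uniformity of these limits in the endpoint direction, and the strict concavity of $\gpp$ is what then uniquely identifies the limit direction as $-\uvec$.
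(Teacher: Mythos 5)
Your proposal is correct and follows essentially the same route as the paper: reduce to event (i) via Lemma \ref{bgeod-lm8}, establish the backward direction $-\uvec$ by combining \eqref{bgeod93} with the shape theorem, the cocycle ergodic theorem and strict concavity of $\gpp$, and then contradict Theorem \ref{th:midpt} with $z_n\equiv 0$. Your extra care with the measurable selection of $u_n$ and its extension off $E_0$ is a harmless elaboration of what the paper does implicitly by allowing random $u_n,v_n$ in Theorem \ref{th:midpt}.
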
 

\begin{proof}   This theorem follows from  Lemma \ref{bgeod-lm8} and 
\be\label{bg89} \P\{\text{\rm there exists a bi-infinite $\Bus{\uvec}$-geodesic}\}=0.  \ee 
%
%
%
%
To  prove \eqref{bg89} 
 we use the solution of the midpoint problem to prove that a bi-infinite $\Bus{\uvec}$-geodesic goes through the origin with probability zero.  Suppose $\{x_n\}_{n\in\Z}$ is a bi-infinite $\Bus{\uvec}$-geodesic with $x_0=0$.     To apply Theorem \ref{th:midpt} to $u_n=x_{-n}$, $z_n=0$ and $v_n=x_n$ we need the limits 
  \be\label{bgeod104}    \frac{x_{-n}}n\to -\uvec\quad\text{and}\quad  \frac{x_{n}}n\to \uvec \ee
  almost surely on the event where a bi-infinite $\Bus{\uvec}$-geodesic through the origin exists. 
  
  The second limit of  \eqref{bgeod104}  is in \eqref{bgeod98}.  The backward limit ${x_{-n}}/n\to -\uvec$ is proved by the same argument.  Namely, since $x_{-n,0}$ is a (finite) $\Bus{\uvec}$-geodesic (that is, $\bgeodu{x_{-n}}_j=x_{-n+j}$ for $0\le j\le n$),  
 \eqref{bgeod93} applies and gives 
 \[  G_{x_{-n},0} =   \Bus{\uvec}_{x_{-n},0} + \Yw_0 . \]
 The uniform passage time limit \eqref{sh89} applies to the southwest LPP process to give 
 \[    G_{x_{-n},0}  =  \gpp(-x_{-n}) + o(n)\qquad\text{almost surely.}  \]
 The uniform ergodic theorem for cocycles (Theorem   \ref{cc-ergthm}) gives 
\[   \Bus{\uvec}_{x_{-n},0}  = \nabla\gpp(\uvec)\cdot (-x_{-n}) + o(n) \qquad\text{almost surely.}  \] 
These almost sure asymptotics  and strict concavity of $\gpp$  in the form \eqref{bg-gpp7} then  imply that the first limit in \eqref{bgeod104} holds almost surely on the  event where a bi-infinite $\Bus{\uvec}$-geodesic $x_\bbullet$ through $x_0=0$ exists.

Since $x_{-n,n}$ is a geodesic through the origin,   we have $0\in\pi^{x_{-n}, x_n}$ for all $n>0$ on the event where the bi-infinite geodesic $x_\bbullet$ goes through the origin.  By Theorem \ref{th:midpt} this  event must have probability zero. 
   \end{proof} 
   
\subsection{Completion of the proofs}    

\begin{proof}[Proof of Theorem \ref{t:main1}]  
 Lemma \ref{Bgeod-lm}(iii) implies that almost surely there is a  unique $\uvec$-directed semi-infinite geodesic out of $x$, namely the $\Bus{\uvec}$-geodesic  $\bgeodu{x}$. Its construction \eqref{bg6} shows that it is a Borel function of the random variables $\Bus{\uvec}_{x,y}$ which in turn are Borel functions of $\Yw$.     Theorem \ref{t:bg105} gives the almost sure  coalescence and non-existence of a bi-infinite geodesic.  (A bi-infinite $\uvec$-directed geodesic $(x_i)_{i\in\Z}$  must also be a   $\Bus{\uvec}$-geodesic because by  Lemma \ref{Bgeod-lm}(iii), $(x_i)_{i\ge\ell}=\bgeodu{x_\ell}$  for each $\ell\in\Z$.) 
 \end{proof}

 \begin{proof}[Proof of Theorem \ref{t:main2}]   Process $\wt Y^\uvec$ was defined in \eqref{Ytil4} and hence by  \eqref{915} satisfies 
\[  \wt Y^\uvec_x=\Xw^{\uvec}_{-x}=\Bus{\uvec}_{-x-\evec_1, -x}\wedge \Bus{\uvec}_{-x-\evec_2, -x}.    \]
The i.i.d.\ Exp(1) distribution of $\Xw^{\uvec}$ gives the same to $\wt Y^\uvec$,  limit \eqref{v:855} shows that $\wt Y^\uvec$ is a Borel function of $\Yw$, and the second equality of the display above implies that $\wt Y^\uvec_x(\theta_y\w)=\wt Y^\uvec_{x-y}(\w)$. 

  Lemma \ref{dgeod-lm1}  and definition \eqref{T*} imply that $\cT_\uvec^*=\bigcup_{z\in\Z^{2*}} \dbgeodu{z}$.  Tracing through  definition \eqref{Ttil} of  $\wt\cT_\uvec$, definition \eqref{bg22} of dual geodesics, and \eqref{bg26} gives the equivalence 
  \[     \{x,x+\evec_1\}\in\wt\cT_\uvec \ 
  \Longleftrightarrow \   \wt B^\uvec_{x,x+\evec_1}\le \wt B^\uvec_{x,x+\evec_2}. \]
  A similar argument gives  $\{x,x+\evec_2\}\in\wt\cT_\uvec \ 
  \Longleftrightarrow \   \wt B^\uvec_{x,x+\evec_1}> \wt B^\uvec_{x,x+\evec_2}.$   The proof of Lemma \ref{dgeod-lm2} observed that $\wt B^\uvec$ is the Busemann function of the   LPP process   with weights $\wt Y^\uvec$.  Hence   \eqref{T3} applied to  weights  $\wt Y^\uvec$ implies that  $\wt\cT_\uvec$ is the tree of semi-infinite $\uvec$-directed geodesics for this LPP process.  
\end{proof}

 \begin{proof}[Proof of Theorem \ref{t:main3}]  
  Define $a_j\in\{1,2\}$ by
 \[  a_j = \begin{cases}   1, &\Bus{\uvec}_{(N+j,-j-1) ,(N+j+1,-j-1)}\le  \Bus{\uvec}_{(N+j,-j-1) ,(N+j,-j)} \\   2, &\Bus{\uvec}_{(N+j,-j-1) ,(N+j,-j)}<  \Bus{\uvec}_{(N+j,-j-1) ,(N+j+1,-j-1)} .  
 \end{cases} \] 
 Property (a) in Definition \ref{v:d-exp-a} applied to the down-right path 
 \[  y_{2j}=(N+j,-j-1), \quad y_{2j+1}=(N+j+1,-j-1)  \] 
 implies that     $\{a_j\}_{j\in\Z}$ are i.i.d.\ random variables with marginal distribution 
   \[   \P(a_j=1)=\alpha=1-\P(a_j=2). \]
 The process $\{\xi_j\}$    is obtained from the connection 
 \[  \xi_j=\begin{cases}  s\\c\\h\\v \end{cases}  
 \quad\text{if}\quad   
 (a_{j-1},a_j)=\begin{cases}  (2,1)\\(1,2)\\(1,1)\\(2,2).  \end{cases}
 \]
 Thus  $\{\xi_j\}$  has the distribution of the Markov chain $X_j=(a_{j-1},a_j)$, after relabeling   the states as above.   
   \end{proof}


\section{Increment-stationary LPP and competiton interface} \label{s:cif} 
 This section explains how  $\Bus{\uvec}$ represents a LPP process with boundary conditions  and how the paths  $\bgeodu{x}$ and $\swbgeodu{x}$ function both as  geodesics and competition interfaces, depending on whether  the LPP uses weights $\Yw$ or  $\Xw^\uvec$.     Fix $\uvec\in\ri\Uset$.  Fix also a down-right path  $\cY=(y_k)_{k\in\Z}$  on   $\Z^2$, that is,  a sequence in $\Z^2$  such that    $y_k-y_{k-1}\in\{\evec_1,-\evec_2\}$ for all $k\in\Z$.  Let $\cH^\pm$ be as in \eqref{H-}--\eqref{H+} and define $\wtcH^\pm=\cY\cup\cH^\pm$.   $\cY$ serves as a boundary and the LPP processes will be defined in the regions $\wtcH^\pm$. 
 
 Let $\abs\pi$ denote the Euclidean length (number of edges) of a nearest-neighbor lattice path.  
 For $x\in\wtcH^+$, let $\Pi^{\cY, x}$ be the set of up-right paths   $\pi=\pi_{0,n}=(\pi_i)_{i=0}^n$ 
 of any length $n=\abs\pi$ that go from $\cY$ to $x$ and that lie in $\cH^+$ except for the initial point on $\cY$:  
 \[  \Pi^{\cY, x}=\{\pi  :  \pi\in\Pi_{\pi_0, x},    \pi_0\in\cY, \pi_{1,\abs\pi}\subset\cH^+\}. 
 \]
 For $x\in\wtcH^+$ define the LPP process 
 \be\label{lpp+}   H^+_x=\sup_{\pi\,\in\,\Pi^{\cY,x}}\Bigl\{   \Bus{\uvec}_{y_0,\pi_0}  +\sum_{i=1}^{\abs\pi} \Xw^\uvec_{\pi_i}   \Bigr\} . 
 \ee
 In the degenerate case  $x\in\cY$ and  $H^+_x=\Bus{\uvec}_{y_0,x}$.  
 The set of paths maximized over can be finite (for example in case $\lim_{k\to-\infty} y_k\cdot \evec_2=\infty$ and $\lim_{k\to\infty} y_k\cdot \evec_1=\infty$) or infinite (for example if $y_k=k\evec_1$ is the $x$-axis).   
 The random variables $\Xw^\uvec_x$ over $x\in\cH^+$ and $\Bus{\uvec}_{y_k,y_{k+1}}$ on $\cY$  are all independent, so $H^+$ is an LPP process that uses independent weights.  To ensure unique geodesics, we restrict ourselves to the full-measure event on which 
 \be\label{Yass} \text{no two nonempty sums of distinct $\{\Xw^\uvec_x\}_{x\in\cH^+}$ and $\{\Bus{\uvec}_{y_k,y_{k+1}}\}_{k\in\Z}$ agree.} 
 \ee
 
 
 A combination of \eqref{915} and \eqref{bg18},  as in the proof of Lemma \ref{Bgeod-lm}(i), shows that the LPP process $H^+$ coincides with $\Bus{\uvec}$ and that  the southwest geodesics  are the geodesics in this process.     
 
 \begin{proposition} \label{pr:Hgeod}  Fix $x\in\cH^+$.  
  Let $n=\min\{i\ge 0: \swbgeodu{x}_i\in\cY\}$.   Then  $\{\pi^{+,x}_i=\swbgeodu{x}_{n-i}\}_{0\le i\le n}$ is the unique maximizing path in \eqref{lpp+} and 
  \be\label{lpp+3}   H^+_x
=   \Bus{\uvec}_{y_0,\pi^{+,x}_0}  +\sum_{i=1}^{n} \Xw^\uvec_{\pi^{+,x}_i}   =   \Bus{\uvec}_{y_0,x}.
    \ee
 \end{proposition}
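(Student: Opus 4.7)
My plan is to mirror the proof of Lemma \ref{Bgeod-lm}(i), substituting the southwest cocycle identities for their northeast counterparts. The two key ingredients are the inequality $\Xw^\uvec_z \le \Bus{\uvec}_{z-\evec_k, z}$ for $k \in \{1,2\}$ with equality for the minimizing $k$, which is just \eqref{915}, and the identity $\Xw^\uvec_{\swbgeodu{x}_k} = \Bus{\uvec}(\swbgeodu{x}_{k+1}, \swbgeodu{x}_k)$ from \eqref{bg18}, which says that the southwest geodesic selects the active minimizer at every step.

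First I would verify that $n=\min\{i\ge0:\swbgeodu{x}_i\in\cY\}$ is finite on a full-probability event. Because $\cY$ separates $\Z^2$ into $\cH^-$ and $\cH^+$ and because $\swbgeodu{x}_k/k\to-\uvec$ with $\uvec\in\ri\Uset$ by Lemma \ref{Bgeod-lmX}(ii), the path $\swbgeodu{x}$ eventually leaves $\cH^+$, and by nearest-neighbor connectivity the first exit point must lie on $\cY$. Thus $\pi^{+,x}_i=\swbgeodu{x}_{n-i}$ is a well-defined element of $\Pi^{\cY,x}$, with $\pi^{+,x}_0\in\cY$ and $\pi^{+,x}_n=x$. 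Next I would establish the upper bound uniformly in $\pi=\pi_{0,m}\in\Pi^{\cY,x}$ by applying $\Xw^\uvec_{\pi_i}\le\Bus{\uvec}_{\pi_{i-1},\pi_i}$ edge by edge and then telescoping via additivity of $\Bus{\uvec}$:
\[
\Bus{\uvec}_{y_0,\pi_0}+\sum_{i=1}^{m}\Xw^\uvec_{\pi_i}
\;\le\;\Bus{\uvec}_{y_0,\pi_0}+\sum_{i=1}^{m}\Bus{\uvec}_{\pi_{i-1},\pi_i}
\;=\;\Bus{\uvec}_{y_0,x}.
\]
Applied to $\pi=\pi^{+,x}$, the identity \eqref{bg18} with $k=n-i$ forces equality at each edge, so the supremum equals $\Bus{\uvec}_{y_0,x}$ and is attained, giving \eqref{lpp+3}.

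For uniqueness, any competing maximizer must satisfy $\Xw^\uvec_{\pi_i}=\Bus{\uvec}_{\pi_{i-1},\pi_i}$ for every $i\ge1$. Under the no-tie event \eqref{Yass} the two candidates in \eqref{915} are distinct at every vertex, so the minimizing index is unique and $\pi_{i-1}$ is determined by $\pi_i$ through the southwest rule \eqref{bg16}. Iterating this backwards recursion from $\pi_m=x$ reproduces the southwest geodesic $\swbgeodu{x}$ one step at a time, and the path terminates precisely when it first enters $\cY$, namely at step $n$. Hence the maximizer is $\pi^{+,x}$. The only nonalgebraic input is the finiteness of $n$, which is already packaged in Lemma \ref{Bgeod-lmX}(ii), so I anticipate no serious obstacle.
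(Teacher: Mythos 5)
Your proof is correct and follows essentially the same route the paper indicates: combine the pointwise inequality \eqref{915} with the equality \eqref{bg18} along the southwest geodesic, telescope by cocycle additivity, and conclude uniqueness from the almost-sure absence of ties. The only cosmetic point is that the no-tie property you need for the backward recursion is that $\Bus{\uvec}_{z-\evec_1,z}\ne\Bus{\uvec}_{z-\evec_2,z}$ at every vertex (a full-measure event for fixed $\uvec$ by the independence structure in Theorem \ref{t:buse}(i)), which is not literally the statement \eqref{Yass} about sums, though \eqref{Yass} yields uniqueness just as well.
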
 
 
 
 For $A\subset\cY$, let  $\cH^+_A=\{x\in\wtcH^+: \pi^{+,x}_0\in A\}$ denote the set of points $x$ whose geodesic emanates from $A$.  Fix two adjacent points $y_m, y_{m+1}$ on $\cY$.  Decompose    $\cH^+= \cH^+_{y_{-\infty, m}}  \cup \cH^+_{y_{m+1,\infty}}$ according to whether the geodesic emanates from $\{y_k\}_{k\le m}$ or $\{y_k\}_{k\ge m+1}$.    The two regions $\cH^+_{y_{-\infty, m}}$ and $\cH^+_{y_{m+1,\infty}}$ are separated by an up-right path $\varphi^+=(\varphi^+_n)_{n\ge 0}$ called the {\it competition interface}: 
\be\label{cif-ne} \begin{aligned}  
\varphi^+_0&=\begin{cases}  y_m,  &\Bus{\uvec}_{y_0,y_m}< \Bus{\uvec}_{y_0,y_{m+1}}\\
 y_{m+1},  &\Bus{\uvec}_{y_0,y_{m+1}}< \Bus{\uvec}_{y_0,y_{m}} 
\end{cases}  \\
\text{and for $k\ge 0$}\qquad 
\varphi^+_{k+1}&=\begin{cases} \varphi^+_k+\evec_1,  &H^+_{\varphi^+_k+\evec_1}< H^+_{\varphi^+_k+\evec_2}\\
\varphi^+_k+\evec_2,  &H^+_{\varphi^+_k+\evec_2}< H^+_{\varphi^+_k+\evec_1}. 
\end{cases}
\end{aligned} \ee
One can check inductively that for each $n\in\Z_{\ge0}$, $\varphi^+_n$ is the unique point on its  antidiagonal $\{x\in\wtcH^+: x\cdot (\evec_1+\evec_2)=\varphi^+_0\cdot (\evec_1+\evec_2)+n \}$ that satisfies   $\varphi^+_n+\Z_{>0}\evec_2\subset \cH^+_{y_{-\infty,m}}$ and $\varphi^+_n+\Z_{>0}\evec_1\subset \cH^+_{y_{m+1,\infty}}$.    
 Comparison of  \eqref{bg6}  and  \eqref{cif-ne}, with an appeal to \eqref{lpp+3},  proves the next  characterization of $\varphi^+$. 
 \begin{proposition}\label{pr:Hcif} 
 $\varphi^+=\bgeodu{\varphi^+_0}$. 
 \end{proposition}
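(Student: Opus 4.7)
The plan is to reduce the recursive definition of the competition interface $\varphi^+$ to the recursive definition of the Busemann geodesic $\bgeodu{\varphi_0^+}$ by replacing the LPP values $H^+$ with Busemann values via Proposition \ref{pr:Hgeod}, and then collapsing the resulting comparison to a one-step cocycle comparison using additivity.

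First, I would invoke Proposition \ref{pr:Hgeod} to write $H^+_x = \Bus{\uvec}_{y_0,x}$ for every $x\in\wtcH^+$. Applied to the two candidate successors $\varphi^+_k+\evec_1$ and $\varphi^+_k+\evec_2$ that appear in \eqref{cif-ne} (which lie in $\wtcH^+$ since the recursion is well-defined), the defining inequality
\[
H^+_{\varphi^+_k+\evec_1}<H^+_{\varphi^+_k+\evec_2}
\]
becomes
\[
\Bus{\uvec}_{y_0,\,\varphi^+_k+\evec_1}<\Bus{\uvec}_{y_0,\,\varphi^+_k+\evec_2}.
\]
Next, by additivity of the cocycle, subtracting $\Bus{\uvec}_{y_0,\varphi^+_k}$ from both sides gives the equivalent local comparison
\[
\Bus{\uvec}_{\varphi^+_k,\,\varphi^+_k+\evec_1}<\Bus{\uvec}_{\varphi^+_k,\,\varphi^+_k+\evec_2},
\]
which is exactly the criterion selecting $\evec_1$ in the rule \eqref{bg6} that defines $\bgeodu{\varphi^+_k}_1$. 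The analogous equivalence holds for $\evec_2$.

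With this equivalence in hand, the proof is finished by an elementary induction on $k$. The base case $\varphi^+_0=\bgeodu{\varphi^+_0}_0$ holds by definition. For the inductive step, assume $\varphi^+_k=\bgeodu{\varphi^+_0}_k$; the step increment $\varphi^+_{k+1}-\varphi^+_k$ is determined by the $H^+$-comparison at $\varphi^+_k$, which by the above reduction equals the $\Bus{\uvec}$-comparison at $\bgeodu{\varphi^+_0}_k$, and therefore matches $\bgeodu{\varphi^+_0}_{k+1}-\bgeodu{\varphi^+_0}_k$.

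There is no genuine obstacle, only one minor matter to dispose of: the selection rule \eqref{cif-ne} uses strict inequalities, whereas \eqref{bg6} breaks ties in favor of $\evec_1$. Under assumption \eqref{Yass}, and because $\Bus{\uvec}_{x,x+\evec_1}$ and $\Bus{\uvec}_{x,x+\evec_2}$ are independent continuous exponential random variables, ties occur only on a null event, so the two rules agree almost surely. I would mention this at the start and then treat the two constructions as identical step-by-step on the relevant full-probability event.
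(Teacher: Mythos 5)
Your proof is correct and is essentially the paper's own argument: the paper proves this proposition by exactly the comparison of \eqref{bg6} with \eqref{cif-ne} via the identity $H^+_x=\Bus{\uvec}_{y_0,x}$ of \eqref{lpp+3} and cocycle additivity, which is what you spell out. Your explicit handling of the tie-breaking convention via \eqref{Yass} is a reasonable addition to what the paper leaves implicit.
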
 
 
 An analogous  LPP process is defined for  $x\in\wtcH^-$  with weights $\Yw$:
  \be\label{lpp-}   H^-_x=\sup_{\pi\,\in\,\Pi^{x,\cY}}\Bigl\{ \, \sum_{i=0}^{\abs\pi-1} \Yw_{\pi_i}  +  \Bus{\uvec}_{\pi_{\abs\pi}, y_0}    \Bigr\} 
 \ee
 where $\Pi^{x,\cY}$ is  the set of up-right paths   from $x$ to $\cY$   that lie in $\cH^-$ except for their final  point on $\cY$.  This time $H^-_x=\Bus{\uvec}_{x, y_0}$,  the part of $\bgeodu{x}$ between $x$ and $\cY$ is the geodesic,  and competition interfaces are southwest geodesics $\swbgeodu{y}$ emanating from points $y\in\cY$.  We omit the details. 

From the results of this section  we can derive Lemma 4.4 of \cite{bala-cato-sepp} as a special case.  Namely,  fix $(m,n)\in\Z_{>0}^2$ and take  $\cH^-$ to be the southwest quadrant bounded on the north and east by the path 
$ y^-_k=(m,n)-k^+\evec_2-k^-\evec_1$.  In terms of the  LPP process $H^-$ and the weights $Y$   in \eqref{lpp-} above, the ``reversed process'' $G^*$ and weights $\w^*$  in  \cite{bala-cato-sepp}  correspond to $G^*_{ij}=H^-_{(m-i,n-j)}$ and $\w^*_{ij} = \Yw_{(m-i,n-j)}$.     The competition interface that emanates from the ``origin'' $(m,n)$ for $H^-$ in \eqref{lpp-}  is the southwest geodesic  $\swbgeodu{(m,n)}$.   According to Proposition \ref{pr:Hgeod} above 
this path is also the geodesic for LPP process  $H^+$ constructed with coordinate axes boundary $ y_k=k^+\evec_1+k^-\evec_2$.  This is exactly what Lemma 4.4 of \cite{bala-cato-sepp} says. 

 \appendix
 
 \section{Planar monotonicity} 
 
Planar  LPP increments possess monotonicity properties.  The lemma below can be found proved as Lemma 4.6 in  \cite{sepp-cgm-18}.  Let the LPP process $\Gpp$ be defined by \eqref{v:GY7} and define increments 
\[	 I_{x,v} =  \Gpp_{x,v} -  \Gpp_{x+e_1,v} \qquad  \text{ and } \qquad   J_{y,v} =  \Gpp_{y,v} -  \Gpp_{y+e_2,v}\,. \]

\begin{lemma}\label{lm-til}  For   $x,y,v\in\Z^2$ such that  $x\le v-e_1$ and $y\le v-e_2$ 
		\be
		\label{til-8}
			 I_{x,v+e_2} \ge  I_{x,v} \ge  I_{x,v+e_1} 
			\qquad  \text{ and } \qquad 
			 J_{y, v+e_2} \le  J_{y,v} \le  J_{y, v+e_1}\,.
		\ee
\end{lemma}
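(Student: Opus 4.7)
The plan is to rewrite each of the four inequalities in \eqref{til-8} as a ``supermodular'' inequality among the four passage times attached to the corners of a rectangle in $\Z^2$, and then to prove each such supermodular inequality by a classical swap-of-tails argument on geodesics. For the $I$-inequalities, direct algebraic rearrangement gives
\begin{align*}
I_{x,v} \ge I_{x,v+e_1} \quad &\Longleftrightarrow \quad G_{x,v} + G_{x+e_1,v+e_1} \,\ge\, G_{x,v+e_1} + G_{x+e_1,v}, \\
I_{x,v+e_2} \ge I_{x,v} \quad &\Longleftrightarrow \quad G_{x,v+e_2} + G_{x+e_1,v} \,\ge\, G_{x,v} + G_{x+e_1,v+e_2}.
\end{align*}
In each line the left-hand side sums the two passage times along a ``parallel'' pairing of the rectangle's corners while the right-hand side sums the two passage times along the ``crossed'' pairing; the two $J$-inequalities rearrange to analogous statements under the $e_1 \leftrightarrow e_2$ symmetry.

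To prove a given supermodular inequality I would fix geodesics $\pi_1$ and $\pi_2$ realizing the two passage times on the right-hand side and argue that they must share at least one common lattice vertex~$z$. The key is a discrete intermediate value argument along the antidiagonals $\{i+j=m\}$: each up-right path occupies exactly one vertex on each antidiagonal it meets, and the difference of the two paths' first coordinates on the same antidiagonal changes by at most one as $m$ increases by one. The interleaved (``crossed'') configuration of the endpoints forces this difference to have opposite signs, or to vanish, at the two ends of the common antidiagonal range, so it must vanish somewhere in between, which is precisely a shared vertex.

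Given such a common vertex $z$, I would swap the tails of $\pi_1$ and $\pi_2$ at $z$, producing two new up-right paths whose endpoints are the ``parallel'' pair. A bookkeeping count of vertices shows that the combined weight of the two swapped paths equals $\omega(\pi_1)+\omega(\pi_2)$, which is the right-hand side of the supermodular inequality. Since the left-hand side is a sum of suprema of weights over all up-right paths between the parallel endpoint pairs, it dominates the combined weight of the swapped paths, giving the inequality. The $J$-inequalities follow by the same argument after exchanging $e_1$ and $e_2$. The main technical point is the planar crossing lemma; once the diagonal-coordinate monotonicity is in hand, the four cases reduce to a short bookkeeping check of the interleaving condition at the four corners of the relevant rectangle, and the degenerate cases where the shared vertex coincides with an endpoint cause no trouble.
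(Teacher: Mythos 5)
Your proof is correct and is essentially the argument the paper relies on: the paper defers to Lemma 4.6 of \cite{sepp-cgm-18}, whose proof is exactly this reduction of each increment inequality to a four-corner supermodularity statement, established by forcing the two ``crossed'' geodesics to share a vertex (the planar crossing/discrete intermediate value step) and then swapping tails, with the shared vertex weight double-counted identically before and after the swap. No gaps; the rearrangements, the interleaving check, and the weight bookkeeping all hold as you describe.
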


\section{Cocycle ergodic theorem}  
 
Recall Definition  \ref{def:cK}.  Covariant integrable cocycles satisfy a uniform ergodic theorem, sometimes also called a shape theorem.

\begin{theorem}  \label{cc-ergthm} Let  $F\in\cK$ be such that   $\E[F(x,y)]=0$  $\forall x,y\in\Z^2$.    Assume that there  exists   a function $\overline F:\Omega\times\{\evec_1,\evec_2\}\to\R$ such that, $\P$-almost surely and for $k\in\{1,2\}$, 
   $F(\w,0,\evec_k)\le \overline F(\w,\evec_k) $  and 
  \be \label{cc-ass1}  \begin{aligned}  
  \varlimsup_{\delta\searrow0 } \;\varlimsup_{n\to\infty} \; \max_{\abs{x}_1\le n}\;  \frac1n \sum_{0\le i\le n\delta}  \abs{\overline F(\theta_{x+i\evec_k}\w, \evec_k)} =0 . 
\end{aligned}\ee 
   Then  
\[ \lim_{n\to\infty}  \max_{\abs{x}_1\le n} \frac{\abs{F(\w,0,x)}}n \;=\; 0 \qquad\P\text{-a.s.} \]

\end{theorem}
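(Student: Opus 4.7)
The plan is a three-step shape-theorem argument: (a) integrability of axial increments and pointwise Birkhoff convergence along axes, (b) pointwise convergence along rational grid points, (c) uniformization over $\{|x|_1\le n\}$ via a finite-grid sandwich that exploits the dominating condition \eqref{cc-ass1}. For (a), \eqref{cc-ass1} already forces $\E|\overline F(\cdot,e_k)|<\infty$—otherwise the inner Birkhoff average at any fixed starting point would diverge a.s., contradicting the vanishing limsup. Combined with $F(0,e_k)\le\overline F(0,e_k)$, this gives $\E F(0,e_k)^+<\infty$, and the mean-zero hypothesis then forces $\E F(0,e_k)^-=\E F(0,e_k)^+<\infty$, so $F(0,e_k)\in L^1(\P)$. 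Birkhoff's ergodic theorem applied to the ergodic shift $\theta_{e_k}$ gives $n^{-1}F(0,ne_k)\to 0$ a.s., and combining with the cocycle identity and translation stationarity yields, for each fixed pair of integers $i,j$ and fixed rational $\delta>0$, the pointwise limit $n^{-1}F(0,(i\lfloor n\delta\rfloor,j\lfloor n\delta\rfloor))\to 0$ a.s.

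For the uniformization, fix $\e>0$ and, via \eqref{cc-ass1}, pick rational $\delta\in(0,\e)$ such that on a full-probability event $\Omega_\e$,
\[
\varlimsup_{n\to\infty}\max_{|z|_1\le 2n}\frac{1}{n}\sum_{i=0}^{\lfloor n\delta\rfloor-1}|\overline F(\theta_{z+ie_k}\w,e_k)|<\e,\qquad k=1,2.
\]
Let $\mathcal G_n=\{(i\lfloor n\delta\rfloor,j\lfloor n\delta\rfloor):|i|+|j|\le\lceil 2/\delta\rceil\}$ be the finite $O(\delta^{-2})$-point rectangular grid of spacing $\lfloor n\delta\rfloor$ inside $\{|y|_1\le 2n\}$; the pointwise limit from step (a) gives $\max_{y\in\mathcal G_n}|F(0,y)|/n<\e$ eventually. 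For arbitrary $x$ with $|x|_1\le n$, pick $y_\pm\in\mathcal G_n$ with $y_-\le x\le y_+$ coordinatewise and $|y_+-y_-|_1\le 2\lfloor n\delta\rfloor$, and use the cocycle decompositions
\[
F(0,x)=F(0,y_-)+F(y_-,x),\qquad F(y_-,x)=F(y_-,y_+)-F(x,y_+).
\]
Each of $F(y_-,y_+)$ and $F(x,y_+)$ is a cocycle sum along up-right paths of length $\le 2\lfloor n\delta\rfloor$; decomposing into axial segments and applying the one-sided bound $F(z,z+e_k)\le\overline F(\theta_z\w,e_k)$ gives $F(y_-,y_+),\,F(x,y_+)\le 2\e n$ uniformly on $\Omega_\e$. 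The grid bound furthermore yields $|F(y_-,y_+)|/n\le|F(0,y_+)|/n+|F(0,y_-)|/n<2\e$, so that $F(y_-,x)\ge F(y_-,y_+)-2\e n\ge -4\e n$ while $F(y_-,x)\le 2\e n$. Consequently $|F(0,x)|/n\le 5\e$ uniformly over $|x|_1\le n$ on $\Omega_\e$, and sending $\e\to 0$ through a countable sequence produces a single full-probability event on which the conclusion holds.

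The main obstacle is precisely the one-sided nature of the dominating bound: the hypothesis gives $F(0,e_k)\le\overline F(0,e_k)$ but not $|F(0,e_k)|\le\overline F(0,e_k)$, so \eqref{cc-ass1} yields only upper bounds on up-right cocycle sums like $F(y,y+Me_k)\le\e n$, with no symmetric lower bound following directly from the hypothesis. The cocycle sandwich $F(y_-,x)=F(y_-,y_+)-F(x,y_+)$, using both a lower and an upper grid point with $y_-\le x\le y_+$, is the device that converts two one-sided upper bounds together with Birkhoff-type pointwise control on $F(0,y_\pm)$ into two-sided control on $F(y_-,x)$. The modest constant inflation (five $\e$'s rather than one) is harmless as $\e\to 0$.
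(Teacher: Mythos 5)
Your overall architecture is sound and is essentially the argument the paper relies on (the paper does not prove this theorem itself but cites Appendix A.3 of \cite{geor-rass-sepp-yilm-15}): pointwise Birkhoff convergence at a finite rescaled grid, followed by a sandwich $F(y_-,x)=F(y_-,y_+)-F(x,y_+)$ between grid points $y_-\le x\le y_+$ that converts the one-sided domination $F(z,z+\evec_k)\le\overline F(\theta_z\w,\evec_k)$ plus two-sided control of $F(0,y_\pm)$ into two-sided control of $F(0,x)$. The bookkeeping in the uniformization step (the $2\e n$ bounds on axial runs of length $\le\lfloor n\delta\rfloor$ via \eqref{cc-ass1}, and the resulting $5\e$) is correct, and your identification of the one-sidedness of $\overline F$ as the main obstacle, together with the $y_\pm$ device, is exactly the right point.

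The one step that is not justified as written is the pointwise limit $n^{-1}F(0,(i\lfloor n\delta\rfloor,j\lfloor n\delta\rfloor))\to0$. ``Combining with the cocycle identity and translation stationarity'' suggests the decomposition $F(0,(a,b))=F(0,a\evec_1)+F(\theta_{a\evec_1}\w,0,b\evec_2)$ with $a=i\lfloor n\delta\rfloor$; but the second term is then a Birkhoff sum for $\theta_{\evec_2}$ started at the site $a(n)\evec_1$, which moves with $n$, so neither Birkhoff's theorem nor stationarity (which gives only equality in distribution, not almost sure convergence along a moving sequence) applies --- controlling exactly such moving sums is what the theorem is for, so this reading is circular. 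The fix is standard and short: for fixed $(i,j)\ne 0$ the grid point is $m(i,j)$ with $m=\lfloor n\delta\rfloor$, and additivity plus covariance give $F(0,m(i,j))=\sum_{\ell=0}^{m-1}F(\theta_{\ell(i,j)}\w,0,(i,j))$, a genuine Birkhoff sum for the single shift $\theta_{(i,j)}$ applied to the integrable (by definition of $\cK$), mean-zero function $F(\cdot,0,(i,j))$; ergodicity of that shift then yields $m^{-1}F(0,m(i,j))\to0$ a.s., hence $n^{-1}F(0,\lfloor n\delta\rfloor(i,j))\to0$. (Relatedly, your opening derivation of $\E|F(0,\evec_k)|<\infty$ from \eqref{cc-ass1} and the mean-zero condition is unnecessary, since integrability is built into $F\in\cK$.) With the Birkhoff-along-$\theta_{(i,j)}$ substitution the proof is complete.
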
 
For a proof see Appendix A.3 of \cite{geor-rass-sepp-yilm-15}.  
A sufficient condition for   limit \eqref{cc-ass1}  is that $\E\abs{\overline F(\w, \evec_k)}^{2+\e} <\infty$ for some $\e>0$ and the shifts of $\overline F$ have  finite range of dependence: namely,    $\exists r_0<\infty $ such that  if  $\abs{x_i-x_j}\ge r_0$ for each pair $i\ne j$, then  
$\{(\overline F(\theta_{x_i}\w, \evec_k))_{k\in\{1,2\}}: 1\le i\le m\}$ is a sequence of $m$ independent  random vectors.  

\medskip 

\small

%
%
%

\end{document}